\newtheorem{theorem}{Theorem}
\newtheorem{proposition}[theorem]{Proposition}
\newtheorem{condition}[theorem]{Condition}
\numberwithin{theorem}{section}
\numberwithin{equation}{section}
\numberwithin{figure}{section}
\numberwithin{table}{section}
\title{An Explicit Cohen-Style Threefield Identity}
\author{Lucas Perryman-Deskins}
\thanks{This work was supported in part by NSF Grant DMS-2101906 (PI: Holly Swisher).} 
\date{\today}
\begin{document}

\begin{abstract}
In 1988, Andrews, Dyson, and Hickerson showed that a $q$-series $\sigma$ found in Ramanujan's lost notebook and related to partitions could be interpreted as counting ideals in $\mathbb{Q}(\sqrt{6})$, and found similar formulas for $\sigma$ in terms of ideals of $\mathbb{Q}(\sqrt{2})$ and $\mathbb{Q}(\sqrt{3})$.  Cohen followed this by showing more generally that for certain triples of quadratic fields, there is abelian extension and conductor so that the ray class character theta series for all three fields coincide. In the intervening years, several $q$-series counting ideals in quadratic fields have been explored, nearly all relating to the fields explored by Andrews et al.  In this paper we give an example of a ray class character theta series which stems from $\mathbb{Q}(\sqrt{-6})$, $\mathbb{Q}(i)$, and $\mathbb{Q}(\sqrt{6})$.  We use recent work of Okano to give explicit formulas via quadratic form theta series with congruence conditions stemming from each field.  We isolate an analogue of $\sigma$ for this series and show that it has coefficients also given by a partition generating function.
\end{abstract}

\maketitle

\section{Introduction}

In 1988 Andrews, Dyson, and Hickerson \cite{ADH} investigated the series 
$$\sigma(q) =\sum_{n\geq 0}\frac{q^{n\choose 2}}{(1+q)\cdots (1+q^n)} =\sum_{n\geq 0}S(n)q^n$$
listed in Ramanujan's lost notebook. In proving properties of $S(n)$, previously known to count certain partitions of $n$, they showed that $S(n)$ could also be understood as counting integral ideals of $\mathbb{Q}(\sqrt{6})$ with norm congruent to $1$ modulo $24$. Alternatively, $\sigma$ shares coefficients with an indefinite theta series for the quadratic form $x^2-6y^2$. They also introduce another series $\sigma^*(q)$, whose coefficients $2S^*(n)$ stem from an extension of a formula for $S(n)$, counting inequivalent solutions to $x^2-6y^2\equiv 1\pmod{24}$ where $x^2-6y^2<0$. Cohen \cite{Cohen} later showed this to be the same as counting integral ideals of $\mathbb{Q}(\sqrt{6})$ with norm congruent to $-1$ modulo $24$. $S^*(n)$ can also be described as a partition counting function. Combining these functions appropriately, the $q$-series
\begin{equation}\label{sigmaeqn}
    q\sigma(q^{24})+q^{-1}\sigma^*(q^{24})=\sum_{\mathfrak{a}\subset \mathbb{Z}[\sqrt{6}]}\chi_1(\mathfrak{a})q^{N(\mathfrak{a})}=\Theta_{\mathfrak{f}_1,\chi_1}
\end{equation}
is a theta series for a specific character $\chi_1$ on the $\mathbb{Q}(\sqrt{6})$ ray class group with respect to conductor $\mathfrak{f}_1=(4(3+\sqrt{6}))$.\\

Through this characterization, Andrews et al. \cite{ADH} were able to express $\sigma$ as an indefinite theta series stemming from the field $\mathbb{Q}(\sqrt{6})$, or in other words, from the quadratic form $x^2-6y^2$. In addition, they discuss similar formulas for $\sigma$ related to $\mathbb{Q}(\sqrt{2})$ and $\mathbb{Q}(\sqrt{3})$. They note that the multiple equations for $\sigma$ and $\sigma^*$ stem from an identity of indefinite theta series. That is, there exist ideals $\mathfrak{f}_2\subset \mathbb{Z}[\sqrt{2}]$ and $\mathfrak{f}_3\subset \mathbb{Z}[\sqrt{3}]$, along with characters on the corresponding ray class groups so that
\begin{equation}\label{sigmathreefield}
    \sum_{\mathfrak{a}\subset \mathbb{Z}[\sqrt{6}]}\chi_1(\mathfrak{a})q^{N(\mathfrak{a})} = \sum_{\mathfrak{a}\subset \mathbb{Z}[\sqrt{2}]}\chi_2(\mathfrak{a})q^{N(\mathfrak{a})} = \sum_{\mathfrak{a}\subset \mathbb{Z}[\sqrt{3}]}\chi_3(\mathfrak{a})q^{N(\mathfrak{a})}.
\end{equation}

Andrews et al. \cite{ADH} give further examples of $q$-series identities which can be formulated in terms of theta series with respect to pairs of quadratic fields. In an article complementary to \cite{ADH}, Cohen \cite{Cohen} gave an explanation for this apparent identity of theta series for pairs of quadratic fields, using the formulas in (\ref{sigmathreefield}) for $\sigma$ in terms of the three fields $\mathbb{Q}(\sqrt{2})$, $\mathbb{Q}(\sqrt{3})$, and $\mathbb{Q}(\sqrt{6})$ as the motivating example. He demonstrated that a similar identity of theta series occurs for any triple of quadratic fields, with the ideal and character defining the theta series stemming from a representation of a shared abelian extension, $L$. $L$ must satisfy the following condition.

\begin{condition}\label{CohenC}
$L/\mathbb{Q}$ is Galois, and $\text{Gal}(L/\mathbb{Q})$ has a cyclic center with index $4$.
\end{condition}
\begin{figure}
    \centering
\begin{tikzpicture}[scale=0.6]
\node (L) {$L$}
	child {node (B) {$B$}
		child {node (k1)  {$K_1$}}
		child {node (k2) {$K_2$}
			child {node (Q) {$\mathbb{Q}$}}
			}
		child {node (k3)  {$K_3$}}
};
\draw [-] (Q) to (k1);
\draw [-] (Q) to (k3);
\end{tikzpicture}
\caption{Threefield Identity Galois Structure}
\label{CohenF}
\end{figure}
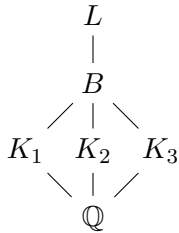
The simplest examples of Galois groups which satisfy Condition \ref{CohenC} are $D_8$, the dihedral group with $8$ elements, and $Q_8$, the quaternion group. \\

Assume $L$ satisfies Condition \ref{CohenC}. Then $L$ has three quadratic subfields, $K_1$, $K_2$, and $K_3$. For each $i\in \{1,2,3\}$, $L/K_i$ is abelian, and class field theory gives a conductor $\mathfrak{f}_i$ so that 
$$\text{Gal}(L/K_i)\cong Cl_{\mathfrak{f}_i}(K_i)/H_i$$
for a certain normal subgroup $H_i$. As described by Cohen \cite{Cohen}, for each $i$, there are two characters $\chi_{i,1}$, $\chi_{i,2}$ on $Cl_{\mathfrak{f}_i}(K_i)/H_i$ such that for any pair $1\leq i\leq 3$, $1\leq j\leq 2$, the corresponding theta series
\begin{equation}\label{Introchartheta}
\theta_{\mathfrak{f}_i,\chi_{i,j}}=\sum_{\mathfrak{a}\subset\mathcal{O}_{K_i}}\chi_{i,j}(\mathfrak{a})q^{N(\mathfrak{a})}
\end{equation}
yield the same $q$-series, thus giving an identity of the form in (\ref{sigmathreefield}). We call such an identity a {\it threefield identity}, following Mortenson \cite{Mortenson} and refer to the resulting $q$-series as {\it threefield theta series}. In the case of the identities for $\sigma$ and $\sigma^*$ given in (\ref{sigmaeqn}) and (\ref{sigmathreefield}), we have 
$$\textstyle L=\mathbb{Q}(\sqrt{2},\sqrt{3+\sqrt{3}})$$
with quadratic subfields $\mathbb{Q}(\sqrt{6})$, $\mathbb{Q}(\sqrt{2})$, and $\mathbb{Q}(\sqrt{3})$.\\

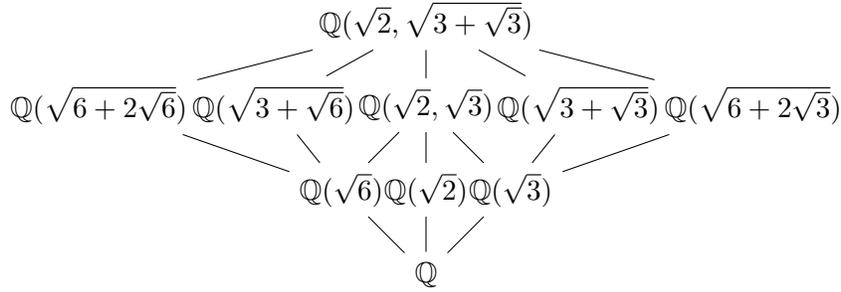
\begin{figure}\label{ADHGalois}
\begin{center}\begin{tikzpicture}[scale=0.75]
\node (L) {$\mathbb{Q}(\sqrt{2}, \sqrt{3+\sqrt{3}})$}
    child {node (A1) [xshift=-2.1cm] {$\mathbb{Q}(\sqrt{6+2\sqrt{6}})$}}
    child {node (A2) [xshift=-0.9cm] {$\mathbb{Q}(\sqrt{3+\sqrt{6}})$}}
	child {node (B) {$\mathbb{Q}(\sqrt{2},\sqrt{3})$}
		child {node (k1)  {$\mathbb{Q}(\sqrt{6})$}}
		child {node (k2) {$\mathbb{Q}(\sqrt{2})$}
			child {node (Q) {$\mathbb{Q}$}}
			}
		child {node (k3)  {$\mathbb{Q}(\sqrt{3})$}}}
    child {node (B1) [xshift=0.9cm] {$\mathbb{Q}(\sqrt{3+\sqrt{3}})$}}
    child {node (B2) [xshift=2.1cm] {$\mathbb{Q}(\sqrt{6+2\sqrt{3}})$}};
\draw [-] (Q) to (k1);
\draw [-] (Q) to (k3);
\draw [-] (A1) to (k1);
\draw [-] (A2) to (k1);
\draw [-] (B1) to (k3);
\draw [-] (B2) to (k3);
\end{tikzpicture}\end{center}
\caption{Field Extension Structure for $\sigma$ and $\sigma^*$}
\end{figure}

It is well understood that theta series associated to definite quadratic forms in $k$ variables are modular forms with weight $k/2$ on some congruence subgroup \cite{123mod}. In the $k=2$ case, one explanation for this is that the Gamma function $\Gamma(s)$ is itself the $\Gamma$-factor for the $L$-function stemming from a binary definite quadratic form. This is generally not the case for indefinite theta functions. In order to construct an automorphic form stemming from 
$$\Theta_{{(4(3+\sqrt{6})),\chi}}=\sum_{n\in \mathbb{Z}}T(n)q^n,$$
Cohen \cite{Cohen} notes that this indefinite binary theta series has an $L$-function with gamma factor $\Gamma(s/2)^2$. This leads him to draw in the Bessel function $K_0$ and construct
$$\varphi_0(x+iy)=y^{1/2}\sum_{n\in \mathbb{Z}}T(n)q^{nx/24}K_0(2\pi|n|y/24),$$
which he shows to be a Maass form. So it turns out that while the threefield theta series in (\ref{sigmaeqn}) and (\ref{sigmathreefield}) is not a modular form, its coefficients can be used to construct a Maass form.\\

Following these discoveries, others have explored a variety of $q$-series analogous to $\sigma$ and $\sigma^*$ in various ways. The earliest example is from Corson et al. \cite{Corson}, who studied a pair of $q$-series which can be expressed in terms of counting ideals in $\mathbb{Q}(\sqrt{2})$, and also have partition theoretic interpretations. Lovejoy \cite{Lovejoy1, Lovejoy3} found a variety of $q$-series related to different kinds of partition counting functions which could also be expressed in terms of counting ideals in  $\mathbb{Q}(\sqrt{6})$, $\mathbb{Q}(\sqrt{2})$, and $\mathbb{Q}(\sqrt{3})$. Lovejoy and Mallet \cite{Lovejoy4} also related partition counting functions to ideals in real quadratic fields, notably finding one identity related to $\mathbb{Q}(\sqrt{5})$. Bringmann and Kane \cite{Bringmann1} and Lovejoy and Osburn \cite{Lovejoy5} used the theory of Bailey chains \cite{andrews1986fifth, ADH} with some newly developed Bailey pairs, to discover further identities related to  $\mathbb{Q}(\sqrt{6})$, $\mathbb{Q}(\sqrt{2})$, and $\mathbb{Q}(\sqrt{3})$. Working in a different direction, Mortenson \cite{Mortenson} applied the theory from Cohen \cite{Cohen} to describe representations of primes by quadratic forms stemming from triples of quadratic fields as in Figure \ref{CohenF}. For this purpose he explicates some identities relating to ideals in the triples of fields $\mathbb{Q}(\sqrt{-2})$, $\mathbb{Q}(i)$, $\mathbb{Q}(\sqrt{2})$, and also $\mathbb{Q}(\sqrt{-5})$, $\mathbb{Q}(i)$, $\mathbb{Q}(\sqrt{5})$. A few other identities of a similar flavor have been explored, but the vast majority have stemmed from the three real quadratic fields explored in Andrews et al. \cite{ADH}.\\

One naturally asks, are there interesting $q$-series stemming from Cohen-style identities relating to other fields? The LMFDB lists over 70,000 degree $8$ Galois fields with Galois group satisfying Condition \ref{CohenC}. When looking at such degree $8$ number fields, many of the resulting threefield theta series have terms which are not clearly groupable by the congruence class of their exponents modulo some small integer. The few that do are particularly interesting. This grouping also allows us to consider the sequences of coefficients attached to each subseries, and to consider whether there is a combinatorial interpretation of these coefficients.\\

In this paper we will investigate the threefield theta series $\Theta$ corresponding to the triple of fields $\mathbb{Q}(\sqrt{-6})$, $\mathbb{Q}(i)$, and $\mathbb{Q}(\sqrt{6})$ with the latter shared in common with the example in \cite{ADH, Cohen}. The series $\Theta$ satisfied all of the above properties of interest, and is also open to additional comparison with $\sigma$ and $\sigma^*$. Like with the example in (\ref{sigmaeqn}) and (\ref{sigmathreefield}), where the exponents of the threefield theta series fall into only two classes, $1$ and $-1$, modulo $24$, the exponents of $\Theta$ also fall into only two classes, here $1$ and $5$ modulo $24$. Thus $\Theta$ naturally breaks down into two functions analogous to $\sigma$ and $\sigma^*$, and the part with exponents congruent to $1$ modulo $24$ can be compared to $\sigma$.\\

We hope to clarify some details on deriving theta series in terms of quadratic forms from those in terms of ideals in quadratic fields. For this purpose, we use results from Okano \cite{okano2021theta} detailing the correspondence between ray class theta series and theta series for binary quadratic forms with congruence conditions. Okano also gives conditions under which these theta series are weight $1$ modular forms with respect to a specific congruence subgroup.

\begin{theorem}\label{MainT}
    Let $L=\mathbb{Q}(\sqrt[4]{6},i)$. Then $L$ satisfies Condition \ref{CohenC}, and the resulting threefield theta series $\Theta$ has the form
    $$\Theta(\tau)=q\rho(q^{24})+q^5\rho^*(q^{24}),$$
    where $\Theta,$ $q\rho(q^{24}),$ and $q^5\rho^*(q^{24})$ are each weight $1$ modular forms with respect to $\Gamma_1(2304)$, and
    $$q\rho(q^{24})=\frac{\eta(48\tau)^8}{\eta(24\tau)^3\eta(96\tau)^3}.$$
\end{theorem}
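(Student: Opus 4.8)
The plan is to verify each assertion in turn, starting from the structure of $L$ and ending with the $\eta$-quotient identity. First I would check Condition~\ref{CohenC}: for $L=\mathbb{Q}(\sqrt[4]{6},i)$ one computes $[L:\mathbb{Q}]=8$ and identifies $\mathrm{Gal}(L/\mathbb{Q})$. Adjoining $i$ to $\mathbb{Q}(\sqrt[4]{6})$ makes $L/\mathbb{Q}$ Galois (it is the splitting field of $x^4-6$), and the Galois group is the familiar $D_8$ acting on the roots $\pm\sqrt[4]{6},\pm i\sqrt[4]{6}$; its center is the cyclic order-$2$ subgroup generated by complex conjugation composed with $\sqrt[4]{6}\mapsto-\sqrt[4]{6}$, which has index $4$. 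The three quadratic subfields fixed by the three index-$2$ subgroups containing the center are $\mathbb{Q}(\sqrt{6})$, $\mathbb{Q}(i)=\mathbb{Q}(\sqrt{-1})$, and $\mathbb{Q}(\sqrt{-6})$, matching the claimed triple, with $B=\mathbb{Q}(\sqrt{6},i)$ the biquadratic field in the role of Figure~\ref{CohenF}.

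Next I would produce the threefield theta series concretely. Apply Cohen's construction \cite{Cohen} to get, for each $i$, the conductor $\mathfrak{f}_i$ and the ray class characters $\chi_{i,j}$, so that the common $q$-series $\Theta$ is well-defined; then invoke Okano's dictionary \cite{okano2021theta} to rewrite one of these ray class theta series — say the one for $\mathbb{Q}(\sqrt{6})$, which connects most directly to the $\sigma/\sigma^*$ story — as a sum of binary quadratic form theta series $\sum_{(x,y)\in\mathbb{Z}^2}\varepsilon(x,y)\,q^{x^2-6y^2}$ subject to congruence conditions coming from $\mathfrak{f}_1$. From the explicit congruences one reads off that every exponent appearing is $\equiv 1$ or $5\pmod{24}$ (the norm form $x^2-6y^2$ restricted to the relevant residues lands only in these classes), which gives the decomposition $\Theta(\tau)=q\rho(q^{24})+q^5\rho^*(q^{24})$ and defines $\rho,\rho^*$. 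That the three pieces are weight $1$ modular forms on $\Gamma_1(2304)$ then follows from the modularity criterion in Okano \cite{okano2021theta}: a ray class theta series for a character of conductor $\mathfrak{f}_i$ is a weight $1$ form of level $|d_{K_i}|\cdot N(\mathfrak{f}_i)$ (times a bounded factor), and here the level works out to $2304=2^8\cdot 3^2$; the two summands $q\rho(q^{24})$ and $q^5\rho^*(q^{24})$ are modular on the same group because they are the components of $\Theta$ under the $\mathbb{Z}/24$-grading, i.e.\ they are obtained from $\Theta$ by averaging against a character of $(\mathbb{Z}/24\mathbb{Z})^\times$, an operation that preserves $\Gamma_1(2304)$ (since $24\mid 2304$).

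The last and most delicate step is the $\eta$-quotient identity
$$q\rho(q^{24})=\frac{\eta(48\tau)^8}{\eta(24\tau)^3\eta(96\tau)^3}.$$
I would first check that the right-hand side is a holomorphic weight $1$ form on $\Gamma_1(2304)$ with the correct character and no poles at the cusps, using the Ligozat/Newman criteria on the exponent vector $(24\!:\!-3,\,48\!:\!8,\,96\!:\!-3)$: the weight is $\tfrac12(-3+8-3)=1$, the sum $\sum m\,r_m$ and $\sum (N/m)r_m$ give the right divisibility by $24$, and one must verify the order-at-each-cusp inequalities to confirm holomorphy. Having matched weight, level, character, and cusp behavior, the forms $q\rho(q^{24})$ and the $\eta$-quotient live in the same finite-dimensional space, so it suffices to match finitely many Fourier coefficients — I would compute both $q$-expansions up to the Sturm bound for $\Gamma_1(2304)$ in weight $1$ and check agreement. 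The main obstacle is precisely this verification being genuinely large: the Sturm bound at level $2304$ is on the order of several hundred coefficients, and weight $1$ is subtle (the space need not be spanned by Eisenstein series, and dimension formulas are not elementary), so some care is needed either to reduce the level by exploiting the $q\mapsto q^{24}$ substitution (working instead with a form in $q$ on a much smaller group, e.g.\ $\Gamma_0(4)$ or $\Gamma_1(8)$, where the Sturm bound is tiny) or to identify $\rho$ directly with a known theta series / $\eta$-quotient for $x^2-6y^2$ with the relevant congruences and then simplify. I expect the cleanest route is the level-reduction: show $\rho(q)$ itself is an $\eta$-quotient in $q$ of small level, prove that by a short Sturm-bound check, and then substitute $q\mapsto q^{24}$.
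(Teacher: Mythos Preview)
Your outline is broadly the paper's: verify the $D_8$ structure, apply Cohen to obtain $\Theta$, use Okano to pass to quadratic-form theta series with congruence conditions, read off the residues $1,5\pmod{24}$, and for the $\eta$-quotient check the Ligozat/Newman criteria and finish with a Sturm-bound comparison. Two specific choices differ from the paper and deserve comment.

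The paper carries out the main argument through $K_1=\mathbb{Q}(\sqrt{-6})$, not $K_3=\mathbb{Q}(\sqrt{6})$. This is not cosmetic. In Okano's theorem the imaginary case (i) says each individual ray class theta $\theta_{\mathfrak{C}}$ is already a weight-$1$ form on $\Gamma_1(|D|\,N(\mathfrak{f}_1))=\Gamma_1(24\cdot 96)=\Gamma_1(2304)$, so $q\rho(q^{24})$ and $q^5\rho^*(q^{24})$ are modular simply as finite sums of such $\theta_{\mathfrak{C}}$, grouped by the residue of their exponents. In the real case (ii) only the differences $\theta_{\mathfrak{C}}-\theta_{\mathfrak{C}[(\nu)]}$ are modular; individual $\theta_{\mathfrak{C}}$ are indefinite and not modular, and one must also manage unit-orbit equivalence and two narrow-class forms ($2x^2-3y^2$ and $x^2-6y^2$) rather than a single $x^2-6y^2$. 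Your averaging-over-$(\mathbb{Z}/24\mathbb{Z})^\times$ argument does recover the modularity of the pieces (since $24^2\mid 2304$, twisting stays at level $2304$), so your route is ultimately valid, but the imaginary-field route the paper takes avoids all of this. Note also that some ray classes carry exponents $\equiv 7,11\pmod{24}$; the paper checks from its explicit congruence table that these occur in paired cosets $\mathcal{B},\mathcal{B}'$ whose contributions cancel, a step your sketch skips.

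Your Sturm-bound estimate is off by three orders of magnitude: for $\Gamma_1(2304)$ the bound is $\tfrac{1}{12}[SL_2(\mathbb{Z}):\Gamma_1(2304)]=294912$, not ``several hundred''. The paper does not attempt level reduction; it computes that many coefficients of both sides and checks agreement. Your suggestion of descending to a small level for $\rho(q)$ is appealing and would shrink the verification dramatically, but you would first need an independent argument placing $\rho(q)$ in some $M_1(\Gamma_1(M))$ with small $M$ before a short Sturm check there is legitimate, and you have not supplied one.
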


\begin{proposition}\label{PartC}
    Letting 
    $$\rho(q)=\sum_{n\geq0}r(n)q^n,$$ 
    the function $r(n)$ counts the number of partitions of $n$ into distinct $3$-colored odd parts and an even number of distinct $2$-colored even parts, minus the number of partitions of $n$ into distinct $3$-colored odd parts and an odd number of distinct $2$-colored even parts.
\end{proposition}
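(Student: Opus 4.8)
The plan is to reduce Proposition \ref{PartC} to an elementary manipulation of infinite products, taking as input the eta-quotient formula for $q\rho(q^{24})$ from Theorem \ref{MainT}. Writing $\eta(k\tau)=q^{k/24}\prod_{n\ge 1}(1-q^{kn})$ and tracking the leading power of $q$, I would first compute
$$q\rho(q^{24})=q\,\frac{\prod_{n\ge1}(1-q^{48n})^{8}}{\prod_{n\ge1}(1-q^{24n})^{3}\prod_{n\ge1}(1-q^{96n})^{3}},$$
so that cancelling the factor $q$ and substituting $q^{24}\mapsto q$ yields
$$\rho(q)=\frac{(q^{2};q^{2})_{\infty}^{8}}{(q;q)_{\infty}^{3}\,(q^{4};q^{4})_{\infty}^{3}},$$
in the usual $q$-Pochhammer notation $(a;q)_{\infty}=\prod_{n\ge0}(1-aq^{n})$.

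The crux is to rewrite this eta-quotient in a form whose combinatorial content is transparent. Separating parts by residue class, $(q;q)_{\infty}=(q;q^{2})_{\infty}(q^{2};q^{2})_{\infty}$ and $(q^{2};q^{2})_{\infty}=(q^{2};q^{4})_{\infty}(q^{4};q^{4})_{\infty}$, together with the elementary identity $(-q;q^{2})_{\infty}(q;q^{2})_{\infty}=(q^{2};q^{4})_{\infty}$ (coming from $(1+q^{2n-1})(1-q^{2n-1})=1-q^{4n-2}$), I would simplify the expression for $\rho(q)$ step by step to
$$\rho(q)=(-q;q^{2})_{\infty}^{3}\,(q^{2};q^{2})_{\infty}^{2}=\prod_{n\ge1}(1+q^{2n-1})^{3}\prod_{n\ge1}(1-q^{2n})^{2}.$$
This is purely formal algebra of products and carries no real difficulty beyond careful bookkeeping.

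Finally I would read off the combinatorics factor by factor. Expanding $\prod_{n\ge1}(1+q^{2n-1})^{3}$ shows it to be the generating function for partitions into distinct $3$-colored odd parts, with each (size, color) pair used at most once. Expanding $\prod_{n\ge1}(1-q^{2n})^{2}=\prod_{n\ge1}(1-2q^{2n}+q^{4n})$ shows that its $q^{n}$-coefficient is the number of partitions of $n$ into distinct $2$-colored even parts with an even number of parts, minus the number with an odd number of parts, the sign being $(-1)^{(\text{number of parts})}$. Taking the Cauchy product of the two series and extracting the coefficient of $q^{n}$ then gives exactly the signed count claimed for $r(n)$, which completes the proof. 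The only point demanding care — and the closest thing to an obstacle here — is matching the verbal description "distinct $j$-colored parts" to the exponent $j$ on the corresponding product and keeping the parity sign attached solely to the even parts; the argument itself is elementary.
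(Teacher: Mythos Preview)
Your proposal is correct and follows essentially the same route as the paper: both start from the eta-quotient of Theorem \ref{MainT}, reduce it to $\rho(q)=(-q;q^{2})_{\infty}^{3}(q^{2};q^{2})_{\infty}^{2}$, and then read off the partition interpretation factor by factor. The only cosmetic difference is in the intermediate product identities used to reach that final form; the paper invokes $(q^{a};q^{a})_{\infty}(-q^{a};q^{a})_{\infty}=(q^{2a};q^{2a})_{\infty}$ directly, whereas you split by residue class and use $(-q;q^{2})_{\infty}(q;q^{2})_{\infty}=(q^{2};q^{4})_{\infty}$, but this is the same elementary manipulation.
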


For example, we have the following partitions of $7$ into odd parts appearing at most $3$ times, and even parts at most twice, with the number of valid colorings:
\begin{align*}
    &7 & (3) \\
    &6+1 & (6)\\
    &5+2 & (6)\\
    &5+1+1 & (9)\\
    &4+3 & (6)\\
    & 4+2+1& (12)\\
    &4+1+1+1 & (2)\\
    &3+3+1& (9)\\
    &3+2+2& (3)\\
    &3+2+1+1& (18)\\
    &2+2+1+1+1& (1).
\end{align*}
There are a total of $37$ colored partitions with an even number of even parts, and $38$ colored partitions with an odd number of even parts. Thus $r(7)=-1$.\\

The process outlined by Okano \cite{okano2021theta} to derive quadratic form theta series from ray class theta series differs depending on whether the field is real or imaginary, and on the field's narrow class group structure. In addition to including a combination of real and imaginary fields, the three fields have differing narrow class group structures:
\begin{align*}
    &Cl_{\mathbb{Q}(i)}=Cl_{\mathbb{Q}(i)}^+=\{(1)\}\\
    &Cl_{\mathbb{Q}(\sqrt{-6})}=Cl_{\mathbb{Q}(\sqrt{-6})}^+=\{(1), (2,\sqrt{-6})\}\\
    &Cl_{\mathbb{Q}(\sqrt{6})}=\{(1)\} \;\text{ and }\; Cl_{\mathbb{Q}(\sqrt{6})}^+=\{(1),(2+\sqrt{6})\}.
\end{align*}

\begin{proposition}\label{ThreeProp}
    For each field, $\mathbb{Q}(\sqrt{-6})$, $\mathbb{Q}(i)$, and $\mathbb{Q}(\sqrt{6})$, there are distinct quadratic forms and congruence conditions yielding explicit formulas for $\Theta$, $\rho$, and $\rho^*$ as sums of binary quadratic form theta series.
\end{proposition}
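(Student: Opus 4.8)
The plan is to run the dictionary of Okano \cite{okano2021theta} between ray class character theta series and binary quadratic form theta series with congruence conditions, separately for each of the three fields. The starting data are the conductors $\mathfrak{f}_i$ and the characters $\chi_{i,j}$ on $Cl_{\mathfrak{f}_i}(K_i)/H_i$ produced by Cohen's construction \cite{Cohen} for $L=\mathbb{Q}(\sqrt[4]{6},i)$, as in the discussion around (\ref{Introchartheta}). So the first task, for each $i$, is to make this data completely explicit: identify the finite part of $\mathfrak{f}_i$ and the rational integer it lies above, compute the narrow ray class group together with the subgroup $H_i$ realizing $\mathrm{Gal}(L/K_i)\cong Cl_{\mathfrak{f}_i}(K_i)/H_i$, and write down the two characters $\chi_{i,1},\chi_{i,2}$ concretely on this quotient. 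The narrow class group data needed for each field is already recorded in the excerpt.

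The core step is the reduction itself, carried out uniformly as follows. Fix integral ideal representatives $\mathfrak{b}_1,\dots,\mathfrak{b}_{h^+}$ of the narrow class group $Cl^+(K_i)$. For an ideal $\mathfrak{a}$ in the inverse narrow class of $\mathfrak{b}_k$, take the totally positive generator $\alpha$ of $\mathfrak{a}\mathfrak{b}_k$; then $N(\mathfrak{a})=N(\alpha)/N(\mathfrak{b}_k)$, and writing $\alpha$ in a fixed $\mathbb{Z}$-basis of $\mathfrak{b}_k$ expresses $N(\alpha)$ as a primitive binary quadratic form $Q_k$ of discriminant $\mathrm{disc}(K_i)$ evaluated at the coordinate vector. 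Because $\chi_{i,j}(\mathfrak{a})$ depends only on the narrow class of $\mathfrak{b}_k$ and on $\alpha$ modulo $\mathfrak{f}_i$, it is a function of the coordinate vector modulo a fixed integer $N_i$ (together with sign data at the real places of $K_i$), and is constant on a finite union of congruence classes. Summing over $k$ then rewrites $\theta_{\mathfrak{f}_i,\chi_{i,j}}$, and hence $\Theta$, as an explicit $\mathbb{Z}$-linear combination of theta series of the $Q_k$ subject to congruence conditions; extracting the exponents $\equiv 1$ and $\equiv 5$ modulo $24$ (the only classes that occur, by Theorem~\ref{MainT}) produces the corresponding formulas for $\rho$ and $\rho^*$.

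The three fields differ precisely in the features of this procedure. For $\mathbb{Q}(i)$ the narrow class number is $1$, so a single positive definite form $x^2+y^2$ with congruence conditions suffices, once the four units are accounted for. For $\mathbb{Q}(\sqrt{-6})$ the class group has order $2$ and equals the narrow class group, so both positive definite forms of discriminant $-24$ appear, namely $x^2+6y^2$ (principal class) and $2x^2+3y^2$ (the class of $(2,\sqrt{-6})$). For $\mathbb{Q}(\sqrt6)$ the class number is $1$ but the narrow class number is $2$, since the fundamental unit $\varepsilon_0=5+2\sqrt6$ has norm $+1$; here the forms are indefinite, one uses the two narrow representatives $x^2-6y^2$ and $2x^2-3y^2$ (the latter from the ideal $(2,\sqrt6)$), and the sum over $\alpha$ must run over a fundamental domain for multiplication by $\varepsilon_0$. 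Concretely this imposes a domain restriction at the two real places on top of the congruence conditions, reproducing exactly the ``inequivalent solutions with $x^2-6y^2<0$'' type of condition found by Andrews--Dyson--Hickerson \cite{ADH}.

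I expect this last point to be the main obstacle: tracking the action of the totally positive units simultaneously with the congruence conditions in the real field, and checking that the resulting Hecke-type indefinite theta series is well defined and independent of the chosen fundamental domain. Everything else, given Cohen's threefield identity \cite{Cohen} (which guarantees that the three reductions yield the same $q$-series, so that the only content is the explicit shape of each side), is careful but routine bookkeeping of basis choices, congruence classes, and unit orbits, which I would not reproduce in full.
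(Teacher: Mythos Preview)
Your plan is correct and follows essentially the same route as the paper: for each $K_i$ you compute $\mathfrak{f}_i$ and $Cl_{K_i}(\mathfrak{f}_i)/N_{L/K_i}$, run Okano's correspondence ray class by ray class to turn each $\theta_{\mathfrak{C}}$ into a binary quadratic form theta series with explicit congruence conditions, and then separate the $1$ and $5\pmod{24}$ pieces to obtain $\rho$ and $\rho^*$; the three cases differ exactly as you anticipate, and the indefinite case for $\mathbb{Q}(\sqrt6)$ is handled via a fundamental domain for the totally positive unit action. The only cosmetic difference is that the paper parametrizes integral ideals in a ray class $[\mathfrak{a}]$ by $\beta\in z+\mathfrak{b}$ with $(z)=\mathfrak{f}^{-1}\mathfrak{a}\mathfrak{b}$ (Okano's specific recipe), rather than by generators of $\mathfrak{a}\mathfrak{b}_k$ as you do, and in the real case it invokes Okano's $\nu$-trick (Theorem~\ref{OkanoT}(ii)) to pair classes in $\mathcal{I}$ with classes in $\mathcal{J}$ before reducing to congruence conditions; these are equivalent bookkeeping choices.
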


The rest of the paper is organized as follows. In Section 2, we review some background information from class field theory and the construction of ray class theta series. In Section 3, we prove Theorem \ref{MainT}, by applying the proof of a theorem of Okano \cite{okano2021theta} to the definition of $\Theta$ in terms of ray classes with respect to $\mathbb{Q}(\sqrt{-6})$. In Section 4, we repeat this process using the definition of $\Theta$ in terms of ray classes with respect to $\mathbb{Q}(i)$ and $\mathbb{Q}(\sqrt{6})$ yielding different expressions, and use latter to compare with formulas for $\sigma$ and $\sigma^*$ derived by Andrews et al. \cite{ADH}. In Section 5, we prove Proposition \ref{PartC}, describing the coefficients of $\rho$ with a partition counting function.

\section{Preliminaries}

\subsection{Ray Classes}

For a number field $L$, a {\it modulus} $\mathfrak{f}$ is a collection of finite and arithmetic places of $K$. Taking the ideal perspective, $\mathfrak{f}=\mathfrak{f}_0\mathfrak{f}_\infty$ can be broken into a finite part $\mathfrak{f}_0$, an ideal in $\mathcal{O}_K$ and an infinite part $\mathfrak{f}_\infty$, a subset of real embeddings of $K$. There is a group $I_\mathfrak{f}$ of fractional ideals of $K$ relatively prime to $\mathfrak{f}_0$, that is to say fractional ideals $\mathfrak{a}^{-1}\mathfrak{b}$ for $\mathfrak{a}$, $\mathfrak{b}$ integral ideals which have no prime ideal factors in common with $\mathfrak{f}_0$. $I_\mathfrak{f}$ has a normal subgroup $P_\mathfrak{f}$ made up of principal fractional ideals $(\alpha/\beta)$ with $\alpha,\beta\in \mathcal{O}_K$ satisfying $\alpha\equiv \beta \pmod{\mathfrak{f}_0}$, equivalently $\alpha-\beta\in \mathfrak{f}_0$, and $\alpha/\beta$ positive under all embeddings in $\mathfrak{f}_\infty$.
Then the {\it ray class group} of $K$ for modulus $\mathfrak{f}$ is defined 
$$Cl_K(\mathfrak{f})=I_\mathfrak{f}/P_\mathfrak{f}.$$
Two special cases are worth noting. When $\mathfrak{f}_0=\mathcal{O}_K$ and $\mathfrak{f}_\infty$ is trivial, this is the standard{ \it ideal class group} $Cl_K$. If $\mathfrak{f}_\infty$ is taken to contain all possible real embeddings of $K$, then the group is called the {\it narrow ray class group} for $\mathfrak{f}_0$, denoted $Cl_K^+(\mathfrak{f}_0)$ or just the {\it narrow class group} $Cl_K^+$ when $\mathfrak{f}_0=\mathcal{O}_K$. We will denote a ray class by $\mathfrak{C}\in Cl_K(\mathfrak{f})$, or $\mathfrak{C}=[\mathfrak{a}]$ for any $\mathfrak{a}\in \mathfrak{C}$, and represent multiplication of classes by juxtaposition.\\

Artin reciprocity states that for any abelian extension of number fields $L/K$, there exist moduli $\mathfrak{f}$ of $K$ such that for a particular normal subgroup $N_{L/K}(\mathfrak{f})\subset Cl_K(\mathfrak{f})$, there is a canonical isomorphism, the Artin-Takagi map, so that
\begin{equation}\label{ATMap}
Cl_K(\mathfrak{f})/N_{L/K}(\mathfrak{f})\xrightarrow{\sim}\text{Gal}(L/K).
\end{equation}
The least common multiple of all moduli satisfying (\ref{ATMap}) is called the {\it conductor} of the extension, which is written $\mathfrak{f}_{L/K}$.

\subsection{Ray Class Theta Series}

We use the {\it ideal norm} for integral ideals $\mathfrak{a}$, $N(\mathfrak{a})=[\mathfrak{a}:\mathcal{O}_K]$, 
the index of the ideal as an additive subgroup of the ring of integers. In particular, when $\mathfrak{a}=(\alpha)$ is principal, this corresponds to the element norm with $N(\mathfrak{a})=|N(\alpha)|$. For a fractional ideal $\mathfrak{a}^{-1}\mathfrak{b}$, this norm extends to the quotient of the norms of the integral ideals:
$N(\mathfrak{a}^{-1}\mathfrak{b})=\frac{N(\mathfrak{b})}{N(\mathfrak{a})}.$
For a modulus $\mathfrak{f}$, the norm $N(\mathfrak{f}$) is defined to be the norm of the finite part $\mathfrak{f}_0$. Given a ray class $\mathfrak{C}\in Cl_K(\mathfrak{f})$, we define the {\it ray class theta series}
\begin{equation}\label{rcldefeqn}
\theta_\mathfrak{C}=\sum_{\substack{\mathfrak{a}\in \mathfrak{C}\\ \mathfrak{a}\subseteq \mathcal{O}_K }}q^{N(\mathfrak{a})}=\sum_{n\geq 0}r_{\mathfrak{C}}(n)q^{n},
\end{equation}
where the sum is over integral ideals $\mathfrak{a}\in \mathfrak{C}$, $q=e^{2\pi i\tau}$ for $\tau\in \mathbb{H}$, and $r_{\mathfrak{C}}(n)$ is the number of integral ideals in $\mathfrak{C}$ with norm $n$. As a generalization of (\ref{Introchartheta}), the {\it ray class character theta series} associated to a character $\chi: Cl_K(\mathfrak{f})\rightarrow \mathbb{C}$ is defined by
\begin{equation}\label{Thetadec}
\Theta_{\mathfrak{f}, \chi}=\sum_{\mathfrak{C}\in Cl_K(\mathfrak{f})}\chi(\mathfrak{C})\theta_\mathfrak{C}.
\end{equation}

The study of ray class theta series stems from the classical study of {\it binary quadratic form theta series},
$$\theta_Q=\sum_{(x,y)\in E}q^{Q(x,y)}$$
where $Q(x,y)=ax^2+bxy+cy^2$ is a quadratic form with discriminant $D=b^2-4ac$ and
$$E=\{(x,y)\mid x,y\in \mathbb{Z}\}/\sim$$
for an equivalence relation $\sim$ defined in terms of solutions to $Q(x,y)=1$, which is trivial in the case that $D<0$.\\

In the case of the regular ideal class group for an imaginary
quadratic field $K$ with discriminant $D$, there is a bijection between the distinct ideal classes and definite binary quadratic forms of discriminant $D$ up to a canonical equivalence. This bijection extends to the ideals in each class, so that, for a ideal class-quadratic form pair $(\mathfrak{C}, Q)$, given an ideal $\mathfrak{a}\in \mathfrak{C}$ there is a unique, up to $K$-unit multiple, integer pair $(x,y)$ such that $N(\mathfrak{a})=Q(x,y)$. This means that 
$$w\theta_\mathfrak{C}=\theta_Q$$
where $w$ is the number of roots of unity in $K$.
These theta series are known to be modular forms of weight $1$.\\

Okano \cite{okano2021theta} extended this by showing that under certain conditions, theta series of ray classes in quadratic fields are also theta series of binary quadratic forms with congruence conditions which are modular forms of weight $1$. From \cite[Thm 2.2]{okano2021theta} and the discussion following \cite[Thm 1.2]{okano2021theta} Okano demonstrates the following result.

\begin{theorem}[Okano \cite{okano2021theta}] \label{OkanoT}
Let $K$ be a quadratic field with discriminant $D$.
    \begin{enumerate}
        \item[(i)] Suppose $K$ is an imaginary quadratic field and $\mathfrak{f}=\mathfrak{f}_0$ is a modulus with empty infinite part. Then for any $\mathfrak{C}\in Cl_K(\mathfrak{f})$, $\theta_{\mathfrak{C}}$ is a weight $1$ modular form for $\Gamma_1(-DN(\mathfrak{f}))$.

        \item[(ii)]Suppose $K$ is a real quadratic field, and $\mathfrak{f}$ a modulus where $\mathfrak{f}_\infty=\infty_1\infty_2$ includes both real embeddings. Take $\nu$ to be a totally positive integer with $\nu+1\in \mathfrak{f}_0$. Then for any $\mathfrak{C}\in Cl_K(\mathfrak{f})$, $\theta_{\mathfrak{C}}-\theta_{\mathfrak{C}[(\nu)]}$ is a weight $1$ modular form for $\Gamma_1(DN(\mathfrak{f}))$.
    \end{enumerate}
\end{theorem}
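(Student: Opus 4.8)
The plan is to reduce both parts to the classical transformation theory of theta series attached to a binary quadratic form restricted to a coset of a sublattice, using the dictionary between ideals in a ray class and lattice points subject to congruence and sign conditions. Fix an integral ideal $\mathfrak{b}$ in the inverse class $\mathfrak{C}^{-1}$ coprime to $\mathfrak{f}_0$. Then $\mathfrak{a}\mapsto \mathfrak{a}\mathfrak{b}=(\alpha)$ sets up a bijection between integral ideals $\mathfrak{a}\in\mathfrak{C}$ and generators $\alpha\in\mathfrak{b}$, modulo units, satisfying a fixed congruence $\alpha\equiv\beta\pmod{\mathfrak{f}_0\mathfrak{b}}$ (and, in the real case, the total-positivity imposed by $\mathfrak{f}_\infty$), with $N(\mathfrak{a})=N(\alpha)/N(\mathfrak{b})$. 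Choosing a $\mathbb{Z}$-basis of $\mathfrak{b}$ turns $N(\alpha)/N(\mathfrak{b})$ into the integral binary quadratic form $Q(x,y)$ of discriminant $D$, and the congruence becomes the restriction of $(x,y)$ to a coset $c+M$ of a finite-index sublattice $M\subset\mathbb{Z}^2$ whose index is governed by $N(\mathfrak{f})$. Thus $\theta_\mathfrak{C}$ becomes, up to the factor $w$ (the number of roots of unity in $K$) in the imaginary case, the coset theta series $\sum_{v\in c+M}q^{Q(v)}$.

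For part (i), $D<0$ makes $Q$ positive definite, so this coset theta series converges on $\mathbb{H}$ and is amenable to the standard machinery. First I would verify the $\tau\mapsto\tau+1$ behavior, which is immediate from the integrality of $Q$ on the coset. The substantive step is the $\tau\mapsto-1/\tau$ transformation, obtained from Poisson summation over $M$: this produces the theta series of the dual form on the dual coset, with a Gauss sum as the proportionality constant. Iterating and combining these two generators, and tracking the quadratic character $\left(\tfrac{D}{\cdot}\right)$ that arises as the nebentypus from the Gauss-sum evaluation, one identifies the transformation group as $\Gamma_1(-DN(\mathfrak{f}))$ and the weight as $2/2=1$. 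The level is forced by the discriminant $D$ together with the index contributed by the congruence mod $\mathfrak{f}_0$, giving exactly $-DN(\mathfrak{f})$.

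Part (ii) is where the real difficulty lies, since for $D>0$ the form $Q$ is indefinite and the naive coset theta series is neither convergent nor modular --- this is precisely the Andrews--Dyson--Hickerson/Cohen phenomenon in which individual indefinite theta series complete to Maass forms rather than holomorphic modular forms. The role of the difference $\theta_\mathfrak{C}-\theta_{\mathfrak{C}[(\nu)]}$ is to project onto exactly the characters for which holomorphic modularity survives. I would decompose by orthogonality, $\theta_\mathfrak{C}=\tfrac{1}{h}\sum_\chi\overline{\chi(\mathfrak{C})}\,\Theta_{\mathfrak{f},\chi}$ with $h=|Cl_K(\mathfrak{f})|$, so that
\[
\theta_\mathfrak{C}-\theta_{\mathfrak{C}[(\nu)]}=\tfrac{1}{h}\sum_\chi\overline{\chi(\mathfrak{C})}\bigl(1-\chi([(\nu)])\bigr)\Theta_{\mathfrak{f},\chi}.
\]
The key computation is $\chi([(\nu)])=(-1)^{a_1+a_2}$, where $(a_1,a_2)\in\{0,1\}^2$ is the signature (infinity type) of $\chi$: since $\nu$ is totally positive with $\nu\equiv-1\pmod{\mathfrak{f}_0}$, one has $\chi([(\nu)])=\chi_{\text{fin}}(-1)$, and the triviality of $\chi$ on the unit ideal $(-1)=(1)$ forces $\chi_{\text{fin}}(-1)=\chi_\infty(-1)=(-1)^{a_1+a_2}$ by the product formula. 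Hence $1-\chi([(\nu)])$ vanishes on the even-signature characters (whose theta series are the non-holomorphic/Eisenstein objects) and equals $2$ on the mixed-signature characters. For each surviving mixed-signature $\chi$, $\Theta_{\mathfrak{f},\chi}$ is the theta series of a finite-order Grössencharacter odd at exactly one infinite place, which by Hecke's theory of dihedral weight $1$ forms is a holomorphic weight $1$ cusp form; tracking its conductor and nebentypus as in part (i) places it on $\Gamma_1(DN(\mathfrak{f}))$.

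The main obstacle is the part (ii) mechanism: one must show both that the difference exactly annihilates the even-signature characters --- the identity $\chi([(\nu)])=(-1)^{a_1+a_2}$ and the interpretation of $\nu$ as representing the ``finite $-1$'' ray class being the crux --- and that the surviving mixed-signature theta series are genuinely holomorphic weight $1$ forms of the asserted level, which is a nontrivial input from Hecke's theory rather than a formal consequence of the lattice transformation used in (i). Pinning down the exact level $\Gamma_1(\pm DN(\mathfrak{f}))$ in both cases, with correct bookkeeping of the Gauss sum and the conductor contributed by $\mathfrak{f}_0$, is the other place where care is required.
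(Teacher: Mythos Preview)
The paper does not contain its own proof of this statement; Theorem~\ref{OkanoT} is quoted from Okano and used as a black box. What the paper does, in Sections~3 and~4, is \emph{apply} Okano's method to the specific ray class groups attached to $L=\mathbb{Q}(\sqrt[4]{6},i)$, and from those computations one can infer the shape of Okano's argument. So strictly speaking there is no in-paper proof to grade you against.

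That said, your part~(i) lines up with what one sees in Section~3: the bijection between integral ideals in $\mathfrak{C}$ and lattice points $\beta\in z+\mathfrak{b}$, followed by the rewriting of $N(\mathfrak{a})$ as a positive-definite binary form on a congruence coset, is exactly the mechanism the paper carries out explicitly for $K_1=\mathbb{Q}(\sqrt{-6})$. Your part~(ii), however, takes a genuinely different route from what the paper's Section~4.2 suggests is Okano's. There the real-quadratic case is still handled lattice-theoretically: $\theta_{\mathfrak{C}}-\theta_{\mathfrak{C}[(\nu)]}$ is written directly as an indefinite coset theta series equipped with a sign cutoff (the $\mathrm{sgn}(2x+y)$ and $\mathrm{sgn}(x+y)$ conditions) and a quotient by the totally positive unit action, and the weight-$1$ transformation law is established for that concrete object. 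You instead expand in ray class characters, compute $\chi([(\nu)])=(-1)^{a_1+a_2}$ from the fact that $\nu$ is totally positive with $\nu\equiv-1\pmod{\mathfrak{f}_0}$, and observe that only the mixed-signature characters survive, each of which yields a holomorphic weight-$1$ form by Hecke's theory. This is correct and arguably more conceptual, but it imports Hecke's theorem as an external input, whereas the approach visible in the paper stays at the level of explicit theta transformation laws and is self-contained in that sense.
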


In explaining different formulas for $\sigma$ discovered by Andrews, et al. \cite{ADH}, Cohen \cite{Cohen} demonstrated an equivalence of ray class character theta series across triples of quadratic number fields.

\begin{theorem}[Cohen \cite{Cohen}] \label{CohenT}
Let $L/\mathbb{Q}$ be Galois with $G=\text{Gal}(L/\mathbb{Q})$. Suppose the center of $G$ is cyclic of index $4$. Then the subfield of $L$ fixed by its center is a biquadratic field with three quadratic subfields $K_1$, $K_2$, $K_3$. For each $i$, $L/K_i$ is abelian, so let $\mathfrak{f}_i=\mathfrak{f}_{L/K_i}$ denote the conductor satisfying the Artin-Takagi map (\ref{ATMap}). Then for each $i$, there exists two characters $\chi_{i,1}$, $\chi_{i,2}$ on $Cl_{K_i}(\mathfrak{f}_i)/N_{L/K_i}$ such that 
$$\Theta_{\mathfrak{f}_1, \chi_{1,1}}=\Theta_{\mathfrak{f}_1, \chi_{1,2}}=\Theta_{\mathfrak{f}_{2}, \chi_{2,1}}=\Theta_{\mathfrak{f}_2, \chi_{2,2}}=\Theta_{\mathfrak{f}_3, \chi_{3,1}}=\Theta_{\mathfrak{f}_3, \chi_{3,2}}.$$
\end{theorem}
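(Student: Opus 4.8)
The plan is to reduce the asserted equality of six $q$-series to a single identity of Artin $L$-functions over $\mathbb{Q}$, using inductivity of Artin $L$-functions together with class field theory, after a short representation-theoretic computation inside $G$.

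First I would pin down the group theory. Since $G/Z(G)$ cyclic would force $G$ abelian, the hypothesis $[G:Z(G)]=4$ forces $G/Z(G)\cong(\mathbb{Z}/2\mathbb{Z})^2$, so $B:=L^{Z(G)}$ is biquadratic over $\mathbb{Q}$ and its three quadratic subfields are $K_i=L^{H_i}$, where $H_1,H_2,H_3$ are the three index-$2$ subgroups of $G$ containing $Z(G)$. Each $H_i$ is generated by $Z(G)$ and one further element, hence abelian, so $L/K_i$ is abelian, and Artin reciprocity gives the conductor $\mathfrak{f}_i=\mathfrak{f}_{L/K_i}$ together with the Artin--Takagi isomorphism $Cl_{K_i}(\mathfrak{f}_i)/N_{L/K_i}\xrightarrow{\ \sim\ }H_i$; characters of the left-hand group are exactly characters of $H_i$.

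Next comes the representation theory. Since $\dim\rho$ divides $[G:Z(G)]=4$ for every irreducible $\rho$ (the center being abelian and normal), and a degree-$4$ irreducible would have to be $\mathrm{Ind}_{Z(G)}^{G}\omega$ for a character $\omega$ of the central subgroup $Z(G)$ --- which is never irreducible --- every irreducible representation of $G$ has dimension $1$ or $2$; as $G$ is nonabelian a $2$-dimensional irreducible $\rho$ exists, and I fix one, with central character $\omega$. For each $i$, Clifford theory for the normal index-$2$ subgroup $H_i$ shows that $\rho|_{H_i}$ cannot remain irreducible (the abelian $H_i$ has only $1$-dimensional irreducibles), so $\rho|_{H_i}=\psi_{i,1}\oplus\psi_{i,2}$ with the two summands swapped by the nontrivial coset of $H_i$; writing $H_i=\langle Z(G),g_i\rangle$ with $g_i^2\in Z(G)$, the values $\psi_{i,1}(g_i)$ and $\psi_{i,2}(g_i)$ are the two distinct square roots of $\omega(g_i^2)$, so $\psi_{i,1}\neq\psi_{i,2}$, and Frobenius reciprocity plus a dimension count give $\mathrm{Ind}_{H_i}^{G}\psi_{i,j}\cong\rho$ for $j=1,2$ and all $i$. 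I then take $\chi_{i,j}$ to be the ray class character of $K_i$ corresponding to $\psi_{i,j}$ under the isomorphism above, and assemble the $L$-functions: inductivity gives $L(s,\psi_{i,j},L/K_i)=L(s,\mathrm{Ind}_{H_i}^{G}\psi_{i,j},L/\mathbb{Q})=L(s,\rho,L/\mathbb{Q})$, one Dirichlet series for all $(i,j)$, while class field theory identifies the left-hand side with the Hecke $L$-function $L(s,\chi_{i,j})=\sum_{\mathfrak{a}\subseteq\mathcal{O}_{K_i}}\chi_{i,j}(\mathfrak{a})N(\mathfrak{a})^{-s}=\sum_{n\ge1}\big(\sum_{N(\mathfrak{a})=n}\chi_{i,j}(\mathfrak{a})\big)n^{-s}$, whose $n$-th coefficient is, by (\ref{Thetadec}) and (\ref{rcldefeqn}), exactly the coefficient of $q^n$ in $\Theta_{\mathfrak{f}_i,\chi_{i,j}}$. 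Matching coefficients of the common series $L(s,\rho,L/\mathbb{Q})$ then forces $\Theta_{\mathfrak{f}_1,\chi_{1,1}}=\Theta_{\mathfrak{f}_1,\chi_{1,2}}=\Theta_{\mathfrak{f}_2,\chi_{2,1}}=\Theta_{\mathfrak{f}_2,\chi_{2,2}}=\Theta_{\mathfrak{f}_3,\chi_{3,1}}=\Theta_{\mathfrak{f}_3,\chi_{3,2}}$.

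The step I expect to be the real obstacle is the bookkeeping of Euler factors at ramified primes. The Artin $L$-function $L(s,\rho,L/\mathbb{Q})$ has a prescribed local factor at every prime, whereas $\sum_{\mathfrak{a}}\chi_{i,j}(\mathfrak{a})N(\mathfrak{a})^{-s}$ with $\chi_{i,j}$ read modulo $\mathfrak{f}_i=\mathfrak{f}_{L/K_i}$ omits the Euler factors at primes dividing $\mathfrak{f}_i$ but not the conductor of $\chi_{i,j}$ itself. So one must check that, for characters $\psi_{i,j}$ cut out by a $2$-dimensional $\rho$, each $\chi_{i,j}$ is already primitive of conductor $\mathfrak{f}_i$ (equivalently, that $\ker\psi_{i,j}$ has no fixed field strictly between $K_i$ and $L$), or else verify directly that the missing factors agree across $K_1,K_2,K_3$. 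This is precisely where the arithmetic of the particular $G$ enters, and it is what controls the residue-class structure of the exponents of $\Theta$ (the "$1$ and $5$ modulo $24$" phenomenon here, paralleling "$\pm1$ modulo $24$" for $\sigma,\sigma^*$), so it deserves genuine care rather than being waved through; everything else in the argument is formal.
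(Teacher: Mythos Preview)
The paper does not supply a proof of this statement; Theorem~\ref{CohenT} is quoted from Cohen \cite{Cohen} as a preliminary result, so there is no argument in the present paper to compare yours against. That said, your outline is precisely Cohen's strategy: fix a two-dimensional irreducible $\rho$ of $G$, recognize via Clifford theory that $\rho\cong\mathrm{Ind}_{H_i}^G\psi_{i,j}$ for each $i$ and each of the two constituents $\psi_{i,j}$ of $\rho|_{H_i}$, transport the $\psi_{i,j}$ to ray class characters $\chi_{i,j}$ via the Artin map, and then read off equality of the six Dirichlet series from inductivity of Artin $L$-functions. Your group-theoretic steps are sound; in particular the exclusion of a four-dimensional irreducible is correct (by Frobenius reciprocity such a $\rho$ would occur in the four-dimensional $\mathrm{Ind}_{Z(G)}^G\omega$ with multiplicity $\langle\omega,\rho|_{Z(G)}\rangle=4$, which is absurd), and your argument that $\psi_{i,1}\neq\psi_{i,2}$ and hence $\mathrm{Ind}_{H_i}^G\psi_{i,j}\cong\rho$ is clean.

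The concern you raise about ramified primes is the one genuine subtlety, and you have located it correctly. Inductivity identifies the \emph{primitive} Hecke $L$-series $L(s,\chi_{i,j}^{\mathrm{prim}})$ with $L(s,\rho)$, whereas the theta series $\Theta_{\mathfrak{f}_i,\chi_{i,j}}$ in (\ref{Thetadec}) corresponds to the series with $\chi_{i,j}$ extended by zero on all ideals meeting $\mathfrak{f}_i=\mathfrak{f}_{L/K_i}$. When $H_i\cong(\mathbb{Z}/2\mathbb{Z})^2$ --- as happens for two of the three $i$ in the degree-$8$ case treated in this paper --- the $\psi_{i,j}$ have nontrivial kernel, so $\mathfrak{f}(\chi_{i,j})$ can properly divide $\mathfrak{f}_i$, and one must verify that the discarded local factors match across all six $(i,j)$; this is a finite local check at the rational primes ramified in $L/\mathbb{Q}$. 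One minor correction: the residue-class pattern of the exponents (here $1$ and $5$ modulo $24$) is not governed by this primitivity bookkeeping; it comes from how the norms of integral ideals in the various ray classes distribute modulo $N(\mathfrak{f}_i)$, which is already visible once the $\chi_{i,j}$ are fixed and is what the tables in Sections~3--4 of the paper compute.
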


In the case that $L$ is a degree $8$ extension, $\text{Gal}(L/K_i)\cong \mathbb{Z}/2\mathbb{Z}\times\mathbb{Z}/2\mathbb{Z}$ for $i=1,3$ and $\text{Gal}(L/K_2)\cong \mathbb{Z}/4\mathbb{Z}$.

\section{Proof of Theorem \ref{MainT}}

Let $L=\mathbb{Q}(\sqrt[4]{6}, i)$. Then $L$ is degree $8$ Galois extension of $\mathbb{Q}$ with Galois group $\text{Gal}(L/\mathbb{Q})\cong D_8$. Thus $L$ satisfies Condition \ref{CohenC}, and the subfield fixed by the center of $\text{Gal}(L/\mathbb{Q})$ is the biquadratic field $\mathbb{Q}(\sqrt{6},i)$, with three quadratic subfields

$K_1=\mathbb{Q}(\sqrt{-6})$, $K_2=\mathbb{Q}(i)$, and $K_3=\mathbb{Q}(\sqrt{6})$. Thus for each $i$, $L/K_i$ is a degree $4$ abelian Galois extension. See Figure \ref{MyFields} for the subfield diagram of $K$.
Applying Theorem \ref{CohenT} \cite{Cohen} yields a threefield theta series for $K_1$, $K_2$, $K_3$ which we denote by $\Theta$.
Abbreviating $\mathfrak{f}_{L/K_i}=\mathfrak{f}_i$, the conductor of $L/K_i$ which satisfies (\ref{ATMap}), and setting $\chi_i$ a character of maximal order on $Cl_{K_i}(\mathfrak{f}_i)/N_{L/K}(\mathfrak{f}_i)$ predicted by Theorem \ref{CohenT}, we have
$$\Theta=\Theta_{\mathfrak{f}_1,\chi_1}=\Theta_{\mathfrak{f}_2,\chi_2}=\Theta_{\mathfrak{f}_3,\chi_3}.$$

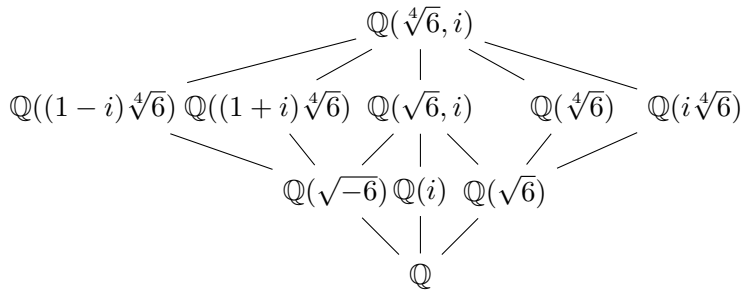
\begin{figure}
\begin{center}\begin{tikzpicture}[scale=0.75]
\node (L) {$\mathbb{Q}(\sqrt[4]{6}, i)$}
    child {node (A1) [xshift=-2.1cm] {$\mathbb{Q}((1-i)\sqrt[4]{6})$}}
    child {node (A2) [xshift=-0.9cm] {$\mathbb{Q}((1+i)\sqrt[4]{6})$}}
	child {node (B) {$\mathbb{Q}(\sqrt{6},i)$}
		child {node (k1)  {$\mathbb{Q}(\sqrt{-6})$}}
		child {node (k2) {$\mathbb{Q}(i)$}
			child {node (Q) {$\mathbb{Q}$}}
			}
		child {node (k3)  {$\mathbb{Q}(\sqrt{6})$}}}
    child {node (B1) [xshift=0.9cm] {$\mathbb{Q}(\sqrt[4]{6})$}}
    child {node (B2) [xshift=1.4cm] {$\mathbb{Q}(i\sqrt[4]{6})$}};
\draw [-] (Q) to (k1);
\draw [-] (Q) to (k3);
\draw [-] (A1) to (k1);
\draw [-] (A2) to (k1);
\draw [-] (B1) to (k3);
\draw [-] (B2) to (k3);
\end{tikzpicture}\end{center}
\caption{Subfields of $\mathbb{Q}(\sqrt[4]{6},i)$}
\label{MyFields}
\end{figure}

We will use the approach of Okano in his proof of Theorem \ref{OkanoT} to study $\Theta$ and prove Theorem \ref{MainT}. We will prove Theorem \ref{MainT} by considering the composition of $\Theta$ from (\ref{Thetadec}) for the field $K_1=\mathbb{Q}(\sqrt{-6})$. However, in Section 4 we will also consider the composition of $\Theta$ in (\ref{Thetadec}) from the perspective of $K_2=\mathbb{Q}(i)$ and $K_3=\mathbb{Q}(\sqrt{6})$ in order to compare with the $K_1$ composition, and in the case of the $K_3$ setting to compare $\rho$ with the series $\sigma$ studied in \cite{ADH} and \cite{Cohen} in relation to $K_3$.\\

Computing with Pari/GP, we calculate that the conductor of $L/K_1$ is 
$$\mathfrak{f}_1=(4\sqrt{-6})\subseteq \mathcal{O}_{K_1}=\mathbb{Z}[\sqrt{-6}],$$
with empty infinite part. Thus $K_1$, $\mathfrak{f}_1$ satisfy the hypotheses of Theorem \ref{OkanoT}, so for each ray ideal class $\mathfrak{C}\in Cl_{K_1}(\mathfrak{f}_1)$ the ray class theta function $\theta_{\mathfrak{C}}$ is a weight 1 modular form for $\Gamma_1(2304)$. Moreover, applying the $\mathbb{C}$-vector space structure on spaces of modular forms to the decomposition in (\ref{Thetadec}), we conclude that $\Theta$ is a weight $1$ modular form for $\Gamma_1(2304)$.\\

Computing using Pari/GP, we obtain that the ray class group of $\mathfrak{f}_1$ is 
$$Cl_{K_1}(\mathfrak{f}_1)\cong(\mathbb{Z}/4\mathbb{Z})^2\times \mathbb{Z}/2\mathbb{Z},$$ 
and the domain of $\chi_1$ and the Artin-Takagi map (\ref{ATMap}) is
\begin{equation}\label{RclgpQ1}
Cl_{K_1}(\mathfrak{f}_1)/N_{L/K_1}\cong \text{Gal}(L/K_1)\cong(\mathbb{Z}/2\mathbb{Z})^2.
\end{equation}
We will denote the four cosets of  $Cl_{K_1}(\mathfrak{f}_1)/N_{L/K_1}$ by $\mathcal{I}, \mathcal{J}, \mathcal{B},$ and $\mathcal{B}'$. 
Since $|Cl_{K_1}(\mathfrak{f}_1)|=32$, there are $8$ ray ideal classes in each coset, and for each class $\mathfrak{C}$ in a given coset, the value of $\chi_1(\mathfrak{C})$ is the same. A character of maximal order on $(\mathbb{Z}/2\mathbb{Z})^2$ has order $2$, so label the cosets such that 
$$\chi_1(\mathcal{I})=\chi_1(\mathcal{B})=1\text{ and } \chi_1(\mathcal{J})=\chi_1(\mathcal{B}')=-1.$$
Thus we can decompose $\Theta=\Theta_{\mathfrak{f}_1,\chi_1}$ using (\ref{Thetadec}) to obtain
\begin{equation}\label{ThetaIJBB}
\Theta=\sum_{\mathfrak{C}\in \mathcal{I}}\theta_{\mathfrak{C}}-\sum_{\mathfrak{C}\in \mathcal{J}}\theta_{\mathfrak{C}}+\sum_{\mathfrak{C}\in \mathcal{B}}\theta_{\mathfrak{C}}-\sum_{\mathfrak{C}\in \mathcal{B}'}\theta_{\mathfrak{C}}.
\end{equation}

Any ray class $\mathfrak{C}\in Cl_{K_1}(\mathfrak{f}_1)$ contains an integral ideal $\mathfrak{a}\subseteq \mathcal{O}_{K_1}=\mathbb{Z}[\sqrt{-6}]$, so $\mathfrak{C}=\mathfrak{a}P_{\mathfrak{f}_1}$. Then $\mathfrak{a}^{-1}\mathfrak{f}_1$ is a fractional ideal of $K_1$, and $[\mathfrak{a}^{-1}\mathfrak{f}_1]$ is an ideal class in the standard (narrow) class group of $K_1$. Since the class number of $K_1$ is $2$, there are $2$ ideal classes, the principal ideals, and the non-principal ideals. Define
\begin{equation}\label{beqn}
\mathfrak{b} = 
\begin{cases}
\mathbb{Z}+\mathbb{Z}\sqrt{-6}=\mathcal{O}_{K_1} \text{ if $\mathfrak{a}$ principal}\\
2\mathbb{Z}+\mathbb{Z}\sqrt{-6}=(2,\sqrt{-6}) \text{ otherwise.}
\end{cases}
\end{equation}
Then $\mathfrak{b}\subseteq \mathcal{O}_{K_1}$ is a primitive integral ideal, in the sense that it is not divisible by any non-unit rational integer. Moreover, $\mathfrak{b}$ belongs to the same standard ideal class as $\mathfrak{a}^{-1}\mathfrak{f}_1$.
So $\mathfrak{f}_1^{-1}\mathfrak{a}\mathfrak{b}$ is a non-zero principal fractional ideal. Let $z\in \mathbb{Q}(\sqrt{-6})^\times$ such that
\begin{equation}\label{ppleqn}
(z)=z\mathcal{O}_{K_1}=\mathfrak{f}_1^{-1}\mathfrak{a}\mathfrak{b}.
\end{equation}
Then $\mathfrak{a}=\mathfrak{b}^{-1}\mathfrak{f}_1(z)$.\\

Starting again with $\mathfrak{C}=[\mathfrak{a}]=\mathfrak{a}P_{\mathfrak{f}_1}$, we can write each integral element of $\mathfrak{C}$ as $\mathfrak{a}(\frac{\alpha}{\beta})$, where $\alpha,\beta\in \mathcal{O}_{K_1}$, $\beta\neq 0$, and $\alpha-\beta\in \mathfrak{f}_1$. 
To simplify, we can write each element of $\mathfrak{C}$ as $\mathfrak{a}(\alpha)$ where $\alpha\in K_1^\times$ and $\alpha=1+\frac{f}{a}$ for $f\in \mathfrak{f}_1$ and $a\in \mathcal{O}_{K_1}$ with $a$ coprime to $f$. 
 Recall the definition
$$\mathfrak{a}^{-1}=\{x\in K_1\mid x\mathfrak{a}\subseteq \mathcal{O}_{K_1}\},$$
so $\alpha\in \mathfrak{a}^{-1}$. Since $1\in \mathfrak{a}^{-1}$, we also have 
$$\alpha-1=\frac{f}{a}\in \mathfrak{a}^{-1}.$$
Furthermore, since $a$ is coprime to $f$, there exist $x,y\in \mathcal{O}_{K_1}$ with $1=ax+fy$. Then for any $s\in \mathfrak{a}$ we have $a^{-1}s=sx+sa^{-1}fy$. Here $sx\in \mathfrak{a}$ and since $a^{-1}f\in \mathfrak{a}^{-1}$ we have $sa^{-1}fy\in \mathcal{O}_{K_1}$. Thus $a^{-1}s\in \mathcal{O}_{K_1}$ for all $s\in \mathfrak{a}$ and $a^{-1}\in \mathfrak{a}^{-1}$. Likewise given $1+a^{-1}f\in 1+ \mathfrak{a}^{-1}\mathfrak{f}$ with $a^{-1}$ and $f$ coprime, it follows that $1+a^{-1}f\equiv 1\pmod{\mathfrak{f}_1}$ and $(1+a^{-1}f)\mathfrak{a}$ is integral so that $(1+a^{-1}f)\in P_{\mathfrak{f}_1}$. Thus the integral elements of $\mathfrak{C}=[\mathfrak{a}]$ are those $\alpha\mathfrak{a}$ such that $\alpha\in 1+\mathfrak{a}^{-1}\mathfrak{f}_1$.
Furthermore, $\alpha\mathfrak{a}=\alpha'\mathfrak{a}$ exactly when $\alpha'=u\alpha$ for some $u\in \mathcal{O}_{K_1}^\times$. In order for $\alpha\equiv \alpha'\equiv 1\pmod{\mathfrak{f}_1}$ it must be a $u\in \mathcal{O}_{K_1}^\times\cap (1+\mathfrak{f}_1)$. $1$ is the only such unit, so we have that each integral element of $\mathfrak{C}=[\mathfrak{a}]$ is uniquely represented by 
$$\{\mathfrak{a}(\alpha)\mid\alpha\in 1+\mathfrak{a}^{-1}\mathfrak{f}_1\}.$$
Thus (\ref{rcldefeqn}) becomes
$$\theta_\mathfrak{C}=\sum_{\alpha\in 1+\mathfrak{a}^{-1}\mathfrak{f}_1}q^{N(\mathfrak{a}(\alpha))}.$$
Using (\ref{ppleqn}) with $\mathfrak{b}$ as defined in (\ref{beqn}), we can write $\mathfrak{a}^{-1}\mathfrak{f}_1=\mathfrak{b}\left(\frac{1}{z}\right)$. Thus
$$\theta_\mathfrak{C}=\sum_{\alpha\in \mathfrak{b}(\frac{1}{z})}q^{N(\mathfrak{b}^{-1}\mathfrak{f}_1(z))}=\sum_{\alpha\in 1+\mathfrak{b}(\frac{1}{z})}q^{\frac{N(\mathfrak{f}_1)N((z\alpha))}{N(\mathfrak{b})}}. $$
We calculate $N(\mathfrak{f}_1)=96$ and set
$$N(\mathfrak{b})=a=\begin{cases} 1 & \mathfrak{a} \text{ principal}\\ 2 & \text{otherwise.}\end{cases}$$
Noting that $z\alpha\in z+\mathfrak{b}$, we can instead sum over $\beta\in z+\mathfrak{b}$ which yields
\begin{equation}\label{tsumbeta}
\theta_\mathfrak{C} = \sum_{\beta\in z+\mathfrak{b}}q^{\frac{96}{a}N(\beta)}.
\end{equation}

To simplify, we can reduce $z$ by an element of $\mathfrak{b}$ to get $x_1, y_1\in \mathbb{Q}$ in the interval $[0,1]$ such that 
$$x_1a+y_1\sqrt{-6}\in z+\mathfrak{b}.$$
Then
$$z+\mathfrak{b} = \{(x+x_1)a+(y+y_1)\sqrt{-6}\mid x,y\in \mathbb{Z}\}.$$
We can then calculate for 
$$\beta=(x+x_1)a+(y+y_1)\sqrt{-6}\in z+\mathfrak{b}$$
the exponent
\begin{equation}\label{betanormeqn}
\frac{96}{a}N(\beta)=96(a(x+x_1)^2+\tfrac{6}{a}(y+y_1)^2).
\end{equation}
Let $M$ to be the least common denominator of $x_1$ and $y_1$. Then setting $i=Mx_1$ and $j=My_1$, we have $i,j\in \mathbb{Z}$. We use (\ref{betanormeqn}) to rewrite (\ref{tsumbeta}) as a sum over pairs of integers
\begin{equation*}
\theta_\mathfrak{C} = \sum_{x,y\in \mathbb{Z}}q^{96\left(a(x+x_1)^2+\frac{6}{a}(y+y_1)^2\right)}= \sum_{\substack{x\equiv i\:(M)\\
y\equiv j\:(M)}}q^{\frac{96}{M^2}\left(ax^2+\frac{6}{a}y^2\right)}.
\end{equation*}

This process yields congruence conditions with which to write each ray class theta series as a binary quadratic form theta series . For example, one class in $\mathcal{I}$ is $[(43+14\sqrt{-6})]$. We can calculate $x_1=\frac{1}{2}$ and $y_1=\frac{5}{24}$ so that $M=24$, $i=12$, and $j=5$. As this is a class of principal ideals, we conclude that
$$\theta_{[(43+14\sqrt{-6})]}=\sum_{\substack{x\equiv 12\:(24)\\
y\equiv 5\:(24)}}q^{\frac{1}{6}x^2+y^2}.$$
Note that for $x\equiv 12\pmod{24}$ and $y\equiv 5\pmod{24}$, we have
$$\frac{1}{6}x^2+y^2\equiv 1\pmod{24}$$
so this $q$-series includes only terms $q^n$ with $n\equiv 1\pmod{24}$.\\

\begin{table}
\centering
\begin{tabular}{|c|c|c|c|c|c|c|c|}
\hline
& $\mathfrak{C}$ & $x_1$& $y_1$&$M$&$i$& $j$& $Q\;(24)$ \\
\hline
& $[\mathcal{O}_{K_1}]$& $0$& $\frac{1}{24}$& 24& 0 & 1 & 1\\
& $[(43+14\sqrt{-6})]$& $\frac{1}{2}$&$\frac{5}{24}$ & 24&12 & 5&1\\
& $[(1-2\sqrt{-6})]$ &$\frac{1}{2}$ & $\frac{1}{24}$& 24&12 & 1& 1\\
$\mathcal{I}$& $[(211-72\sqrt{-6})]$&0 & $\frac{19}{24}$& 24 &0 &19 &1\\
& $[(5, 2+\sqrt{-6})]$&$\frac{1}{8}$ &$\frac{11}{12}$ &24 &3 &22 &5\\
& $[(15125, 14912+\sqrt{-6})]$& $\frac{1}{8}$&$\frac{1}{12}$ &24 & 3& 2&5\\
& $[(125, 37+\sqrt{-6})]$& $\frac{3}{8}$&$\frac{7}{12}$ &24 &9 &14 &5\\
& $[(378125, 45162+\sqrt{-6})]$&$\frac{3}{8}$ & $\frac{5}{12}$&24 &9 &10 &5\\
\hline
& $[(13)]$& 0 &$\frac{11}{24}$ &24 &0 &11 &1\\
& $[(559+182\sqrt{-6})]$& $\frac{1}{2}$&$\frac{7}{24}$ &24 &12 & 7&1\\
& $[(13-26\sqrt{-6})]$& $\frac{1}{2}$&$\frac{13}{24}$ &24 &12 &13 &1\\
$\mathcal{J}$& $[(2743-936\sqrt{-6})]$& 0&$\frac{7}{24}$ &24 & 0& 7&1\\
& $[(65, 26+13\sqrt{-6})]$& $\frac{5}{8}$&$\frac{11}{12}$ &24 & 15& 22&5\\
& $[(196625,193856+13\sqrt{-6})]$& $\frac{5}{8}$&$\frac{1}{12}$ &24 &15 & 2&5\\
& $[(1625, 481+13\sqrt{-6})]$& $\frac{7}{8}$&$\frac{7}{12}$ &24 &21 & 14&5\\
& $[(4915625,587106+13\sqrt{-6})]$& $\frac{7}{8}$&$\frac{5}{12}$ &24 & 21& 10&5\\
\hline
& $[(91+13\sqrt{-6})]$& $\frac{3}{4}$& $\frac{19}{24}$&24 &18 & 19&7\\
& $[(2821+1833\sqrt{-6})]$& $\frac{3}{4}$& $\frac{13}{24}$ &24 & 18& 13&7\\
& $[(247-169\sqrt{-6})]$& $\frac{1}{4}$& $\frac{7}{24}$ &24 &6 &7 &7\\
$\mathcal{B}$& $[(24817-3809\sqrt{-6})]$& $\frac{1}{4}$& $\frac{1}{24}$ &24 &6 &1 &7\\
& $[(3575,806+13\sqrt{-6})]$& $\frac{3}{8}$& $\frac{1}{3}$ &24 &9 &8 &11\\
& $[(10814375,8452106+13\sqrt{-6})]$& $\frac{1}{8}$& $\frac{5}{6}$ &24 & 3& 20&11\\
& $[(89375, 50856+13\sqrt{-6})]$& $\frac{3}{4}$& $\frac{5}{6}$ &24 &9 &20 &11\\
& $[(270359375, 192296481+13\sqrt{-6})]$& $\frac{1}{8}$& $\frac{1}{3}$ &24 &3 & 8&11\\
\hline
& $[(7+\sqrt{-6})]$& $\frac{3}{4}$& $\frac{7}{24}$ &24 &18 & 7&7\\
& $[(217+141\sqrt{-6})]$& $\frac{3}{4}$& $\frac{1}{24}$ &24 & 18& 1&7\\
& $[(19-13\sqrt{-6})]$& $\frac{1}{4}$& $\frac{19}{24}$ &24 &6 & 19&7\\
$\mathcal{B}'$& $[(1909-293\sqrt{-6})]$& $\frac{1}{4}$& $\frac{13}{24}$ &24 & 6& 13&7\\
& $[(275,62+\sqrt{-6})]$& $\frac{7}{8}$& $\frac{1}{3}$ &24 & 21& 8&11\\
& $[(831875, 650162+\sqrt{-6})]$& $\frac{5}{8}$& $\frac{5}{6}$ &24 &15 & 20&11\\
& $[(6875, 3912+\sqrt{-6})]$& $\frac{7}{8}$& $\frac{5}{6}$ &24 & 21& 20&11\\
& $[(20796875, 14792037+\sqrt{-6})]$& $\frac{5}{8}$& $\frac{1}{3}$ &24 &15 & 8&11\\
\hline
\end{tabular}
\caption{Classes of $Cl_{K_1}(\mathfrak{f}_1)$ and corresponding congruence conditions.}\label{K1table}
\end{table}

Table \ref{K1table} contains the results of this calculation for all ray classes. The final column gives $Q(x,y)\pmod{24}$ for $x\equiv i\pmod{24}$ and $y\equiv j\pmod{24}$ with $Q(x,y)=\frac{1}{6}x^2+y^2$ if $\mathfrak{C}$ is a ray class of principal ideals, and $Q(x,y)=\frac{1}{2}x^2+\frac{1}{3}y^2$ if $\mathfrak{C}$ is a ray class of non-principal ideals. From this calculation we can see that ideal classes are made up entirely of ideals with norm congruent to exactly one of $1, 5, 7$ or $11$ modulo $24$. Further, classes in $\mathcal{I}$ and $\mathcal{J}$ contain ideals with norm $1$ or $5$ modulo $24$, while $\mathcal{B}$ and $\mathcal{B}'$ contain classes of ideals with norm $7$ or $11$ modulo $24$. For each $\mathfrak{C}\in \mathcal{B}$, there exists a corresponding $\mathfrak{C}'\in \mathcal{B}'$, such that $i\equiv \pm i'\pmod{24}$ and $j=\pm j'\pmod{24}$. It follows that 
$$\left\{ax^2+\frac{6}{a}y^2\middle| (x,y)\equiv (i,j)\!\!\!\!\pmod{24}\!\right\}=\left\{ax^2+\frac{6}{a}y^2\middle| (x,y)\equiv (i',j')\!\!\!\!\pmod{24}\!\right\}$$
as multisets, and as a result $\theta_{\mathfrak{C}}=\theta_{\mathfrak{C}'}$. Applying this to (\ref{ThetaIJBB}), we see that in fact
\begin{equation}\label{ThetaIJ}
\Theta = \sum_{\mathfrak{C}\in \mathcal{I}}\theta_{\mathfrak{C}}-\sum_{\mathfrak{C}\in \mathcal{J}}\theta_{\mathfrak{C}}.
\end{equation}
We can decompose $\mathcal{I}=\mathcal{I}_1\sqcup\mathcal{I}_5$ and $\mathcal{J}=\mathcal{J}_1\sqcup\mathcal{J}_5$ such that for $\mathfrak{C}\in \mathcal{I}_1\cup \mathcal{J}_1$, $\theta_\mathfrak{C}$ has $q$-exponents congruent to $1$ modulo $24$ and for $\mathfrak{C}\in \mathcal{I}_5\cup \mathcal{J}_5$, $\theta_\mathfrak{C}$ has $q$-exponents congruent to $5$ modulo $24$. Then
\begin{equation*}
\Theta=\left(\sum_{\mathfrak{C}\in \mathcal{I}_1}\theta_{\mathfrak{C}}-\sum_{\mathfrak{C}\in \mathcal{J}_1}\theta_{\mathfrak{C}}\right)+\left(\sum_{\mathfrak{C}\in \mathcal{I}_5}\theta_{\mathfrak{C}}-\sum_{\mathfrak{C}\in \mathcal{J}_5}\theta_{\mathfrak{C}}\right)  = q\rho(q^{24})+q^5\rho^*(q^{24})
\end{equation*}
for unique $\rho,\rho^*\in \mathbb{Z}[[q]]$. From Theorem \ref{OkanoT}, $\theta_{\mathfrak{C}}$ is a modular form for $\Gamma(2304)$ for each $\mathfrak{C}\in Cl_{K_1}(\mathfrak{f}_1)$, so both $q\rho(q^{24})$and $q^5\rho^*(q^{24})$ are as well.\\

The congruence conditions in Table \ref{K1table} can be grouped together into the following sets:
\begin{align*}
A_{1,+}&=\{(0,1), (0,19), (12, 1), (12, 5)\}\\
A_{1,-}&=\{(0,7), (0,11), (12, 7), (12, 13)\}\\
A_{5,+}&=\{(3,2), (3,22), (9, 10), (9, 14)\}\\
A_{5,-}&=\{(15,2), (15,22), (21, 10), (21, 14)\},
\end{align*}
and corresponding pairs of integers satisfying them, each defined by
$$S_{i,*}=\{(x,y)\mid x\equiv a\!\!\!\pmod{24} \text{ and } y\equiv b\!\!\!\pmod{24} \text{ for } (a,b)\in A_{i,*}\}$$
for $i\in \{1,5\}$ and $*\in \{+,-\}.$

As a result we can write
\addtocounter{equation}{1}
\begin{alignat*}{1}
    \Theta(q)&=\sum_{(x,y)\in S_{1,+}}q^{\frac{1}{6}x^2+y^2}-\sum_{(x,y)\in S_{1,-}}q^{\frac{1}{6}x^2+y^2}\\
    &\;\;\;\;\;+\sum_{(x,y)\in S_{5,+}}q^{\frac{1}{3}x^2+\frac{1}{2}y^2}-\sum_{(x,y)\in S_{5,-}}q^{\frac{1}{3}x^2+\frac{1}{2}y^2},\\
    \rho(q)&=\sum_{(x,y)\in S_{1,+}}q^{\frac{x^2+6y^2-6}{144}}-\sum_{(x,y)\in S_{1,-}}q^{\frac{x^2+6y^2-6}{144}},\tag{\theequation} \label{rhom6eq}\\
    \rho^*(q)&=\sum_{(x,y)\in S_{5,+}}q^{\frac{2x^2+3y^2-30}{144}}-\sum_{(x,y)\in S_{5,-}}q^{\frac{2x^2+3y^2-30}{144}}.
\end{alignat*}

With $\eta(\tau)=q^{1/24}(q;q)_\infty$ the Dedekind eta function, we wish to show that 
$$f(\tau)=\frac{\eta(48\tau)^8}{\eta(24\tau)^3\eta(96\tau)^3}$$
is also modular form for $\Gamma_1(2304)$ by applying \cite[Thms 1.64, 1.65]{ono2004web}. We can see that $\frac{1}{2}(8-3-3)=1$ is an integer. Next, we calculate
$$\sum_{\delta|2304}\delta r_\delta= 24,$$
and
$$\sum_{\delta|2304}\frac{2304}{\delta} r_\delta=24.$$
This means that $f$ is modular with respect to $\Gamma_1(2304)$. To check the growth at the cusps, we consider each divisor of $2304$. It turns out that
$$\frac{2304}{24}\sum_{\delta|2304} \frac{\gcd(d,\delta)^2r_\delta}{\gcd(d,\frac{2304}{D})d\delta}=\begin{cases}
    1 & d|2304 \text{ and } d\notin\{16,48,144\}\\
    10 & d\in \{16,48,144\}.
\end{cases}$$
Further,
$$f(\tau)=q\prod_{n\geq 1}\frac{(1-q^{48n})^8}{(1-q^{24n})^3(1-q^{96n})^3},$$
so in fact $f$ vanishes at $i\infty$. From this cusp calculation, we see that $f(\tau)\in S_1(\Gamma_1(N)).$\\

Since we have already demonstrated that $q\rho(q^{24})\in M_1(\Gamma_1(N))$, it remains to confirm that these match up to Sturm's bound \cite[Thm 6.4.7]{murty2015problems}. This is given by
$$\frac{1}{12}[SL_2\mathbb{Z}:\Gamma_1(2304)]=\frac{2304^2}{12}\prod_{p|2304}\left(1-\frac{1}{p^2}\right)=294912,$$
so we need to confirm the first $294912$ terms in the expansions of the two series match. Indeed we can directly calculate this many terms of $f(\tau)$ by multiplication, and $\rho$ either sorting ideals relatively prime to $\mathfrak{f}_1$ or using (\ref{rhom6eq}). Both series match to this many terms, so we conclude that
\begin{equation}\label{rhoetaquoteqn}
q\rho(q^{24})=\frac{\eta(48\tau)^8}{\eta(24\tau)^3\eta(96\tau)^3}.
\end{equation}

\section{Equations derived from the perspective of $\mathbb{Q}(i)$ and $\mathbb{Q}(\sqrt{6})$}

\subsection{The $\mathbb{Q}(i)$ expression}

With respect to $K_2$ we have conductor $\mathfrak{f}_2=(24)$ and $Cl_{K_2}(\mathfrak{f}_2)$ is isomorphic to $\mathbb{Z}/8\mathbb{Z}\times \mathbb{Z}/4\mathbb{Z}\times \mathbb{Z}/2\mathbb{Z}$, whereas $Cl_{K_2}(\mathfrak{f}_2)/N_{L/K_2}$ is isomorphic to $\mathbb{Z}/4\mathbb{Z}$. As a result each coset contains $16$ ideal classes. We take again $\mathcal{I}$ to be the coset of ideals mapped to $1$ and $\mathcal{J}$  the coset of ideals mapped to $-1$, noting that the contribution of ideals in $\mathcal{B}$ mapped to $i$ and in $\mathcal{B}'$ mapped to $-i$ cancel completely.\\

\begin{table}
\centering
\begin{tabular}{|c|c|c|c|c|c|c|c|}
\hline
& $\mathfrak{C}$ & $x_1$& $y_1$&$M$&$i$& $j$& $Q\;(24)$ \\
\hline
& $[\mathcal{O}_{K_2}]$& $\frac{1}{24}$& $0$& 24& 1 & 0 & 1\\
& $[(13)]$& $\frac{13}{24}$&$0$ & 24&13 & 0&1\\
& $[(3956+267i)]$ &$\frac{1}{6}$ & $\frac{7}{8}$& 24&4 & 21& 1\\
& $[(51428+3471i)]$&$\frac{1}{6}$ & $\frac{3}{8}$& 24 &4 &9 &1\\
& $[(3713+2016i)]$&$\frac{7}{24}$ &$0$ &24 &7 &0 &1\\
& $[(48269+26208i)]$& $\frac{19}{24}$&$0$ &24 & 19& 0&1\\
& $[(14150356+8966667i)]$& $\frac{4}{6}$&$\frac{1}{8}$ &24 &16 &3 &1\\
$\mathcal{I}$& $[(183954628+116566671i)]$&$\frac{1}{6}$ & $\frac{5}{8}$&24 &4 &15 &1\\
& $[(2287-3086i)]$ &$\frac{7}{24}$ & $\frac{5}{12}$& 24&7 & 10& 5\\
& $[(29731-40118i)]$& $\frac{19}{24}$&$\frac{5}{12}$ &24 &19 & 10&5\\
& $[(3586-1973i)]$& $\frac{5}{12}$&$\frac{19}{24}$ &24 &10 &19 &5\\
& $[(46618-25649i)]$& $\frac{5}{12}$&$\frac{7}{24}$ &24 & 10& 7&5\\
& $[(14713007-6847726i)]$& $\frac{23}{24}$&$\frac{1}{12}$ &24 & 23& 2&5\\
& $[(191269091-89020438i)]$& $\frac{11}{24}$&$\frac{1}{12}$ &24 &11 & 2&5\\
& $[(17292386-96373i)]$& $\frac{11}{12}$&$\frac{13}{24}$ &24 &22 & 13&5\\
& $[(224801018-1252849i)]$& $\frac{11}{12}$&$\frac{1}{24}$ &24 & 22& 1&5\\
\hline
& $[(60-11i)]$& $\frac{1}{2}$& $\frac{11}{24}$&24 &12 & 11&1\\
& $[(780-143i)]$& $\frac{1}{2}$& $\frac{23}{24}$ &24 & 12& 23&1\\
& $[(63+16i)]$& $\frac{3}{8}$& $\frac{1}{3}$ &24 &9 &8 &1\\
& $[(819+208i)]$& $\frac{7}{8}$& $\frac{1}{3}$ &24 &21 &8 &1\\
& $[(244956+80117i)]$& $\frac{1}{2}$& $\frac{5}{24}$ &24 &12 &5 &1\\
& $[(3184428+1041521i)]$& $\frac{1}{2}$& $\frac{7}{24}$ &24 & 12& 7&1\\
& $[(20166+186416i)]$& $\frac{5}{8}$& $\frac{1}{3}$ &24 &15 &8 &1\\
& $[(2621619+2423408i)]$& $\frac{1}{8}$& $\frac{1}{3}$ &24 &3 & 8&1\\
$\mathcal{J}$& $[(46-43i)]$& $\frac{1}{12}$& $\frac{19}{24}$ &24 &2 & 19&5\\
& $[(598-599i)]$& $\frac{1}{12}$& $\frac{7}{24}$ &24 & 2& 7&5\\
& $[(193457-157826i)]$& $\frac{17}{24}$& $\frac{11}{12}$ &24 &17 & 22&5\\
& $[(2514941-2051738i)]$& $\frac{5}{24}$& $\frac{11}{12}$ &24 & 5& 22&5\\
& $[(17292386-96373i)]$& $\frac{7}{12}$& $\frac{13}{24}$ &24 & 14& 13&5\\
& $[(224801018-1252849i)]$& $\frac{7}{12}$& $\frac{1}{24}$ &24 &14 & 1&5\\
& $[(1036483057-195998626i)]$& $\frac{23}{24}$& $\frac{5}{12}$ &24 & 23& 10&5\\
& $[(13474279741 -2547982138i)]$& $\frac{11}{24}$& $\frac{5}{12}$ &24 &11 & 10&5\\
\hline
\end{tabular}
\caption{Classes of $Cl_{K_2}(\mathfrak{f}_2)$ in $\mathcal{I}$ and $\mathcal{J}$ and corresponding congruence conditions.}\label{K2table}
\end{table}

Table \ref{K2table} contains the calculations of integer equivalence classes modulo $24$ for each ray ideal class in $\mathcal{I}$ and $\mathcal{J}$. For brevity, we omit the calculations for $\mathcal{B}$ and $\mathcal{B}'$.
In this case $Q(x,y)=x^2+y^2$, the unique, up to equivalence quadratic form of discriminant $-4$. Due to the symmetry of $Q$ we can simplify these congruence conditions a bit, resulting in the following sets of integers:

\begin{align*}
    &S_{1,+}=\{(x,y)|\,x\equiv 0\pmod{24}\text{ and } y\equiv 1\!\!\!\pmod{6}\}\\
    &T_{1,+}=\{(x,y)|\,x\equiv 4\pmod{24}\text{ and } y\equiv 3\!\!\!\pmod{12}\}\\
    &S_{1,-}=\{(x,y)|\,x\equiv 12\!\!\!\pmod{24}\text{ and } y\equiv 1\!\!\!\pmod{6}\}\\
    &T_{1,-}=\{(x,y)|\,x\equiv 8\pmod{24}\text{ and } y\equiv 3\!\!\!\pmod{12}\}\\
    &T_{5,+}=\left\{(x,y)\middle|
    \begin{alignedat}{2}
   \phantom{\text{ or }} & x\equiv 2\pmod{24} \text{ and }  y\equiv 1\!\!\pmod{12}\\
   \text{ or }& x\equiv 10\!\!\!\pmod{24}\text{ and } y\equiv 5\!\!\pmod{12}
    \end{alignedat}\right\}\\
    &T_{5,-}=\left\{(x,y)\middle|
    \begin{alignedat}{2}
   \phantom{\text{ or }} & x\equiv 2\pmod{24} \text{ and }  y\equiv 5\!\!\pmod{12}\\
   \text{ or }& x\equiv 10\!\!\!\pmod{24}\text{ and } y\equiv 1\!\!\pmod{12}
    \end{alignedat}\right\}.
\end{align*}

This allows us to write

\begin{alignat*}{1}
    \Theta(q)=&\sum_{(x,y)\in S_{1,+}}q^{x^2+y^2}+\sum_{(x,y)\in T_{1,+}}2q^{x^2+y^2}\\
    & -\sum_{(x,y)\in S_{1,-}}q^{x^2+y^2}-\sum_{(x,y)\in T_{1,-}}2q^{x^2+y^2}\\
    &+\sum_{(x,y)\in T_{5,+}}2q^{x^2+y^2}-\sum_{(x,y)\in T_{5,-}}2q^{x^2+ y^2},\\
    \rho(q)=&\sum_{(x,y)\in S_{1,+}}q^{\frac{x^2+y^2-1}{24}}+\sum_{(x,y)\in T_{1,+}}2q^{\frac{x^2+y^2-1}{24}}\\
    & -\sum_{(x,y)\in S_{1,-}}q^{\frac{x^2+y^2-1}{24}}-\sum_{(x,y)\in T_{1,-}}2q^{\frac{x^2+y^2-1}{24}},\\
    \rho^*(q)=&\sum_{(x,y)\in T_{5,+}}2q^{\frac{x^2+y^2-5}{24}}-\sum_{(x,y)\in T_{5,-}}2q^{\frac{x^2+y^2-5}{24}}.
\end{alignat*}

\subsection{The $\mathbb{Q}(\sqrt{6})$ expression}

For $K_3$, we have conductor $\mathfrak{f}_3=(4\sqrt{6})\infty_1\infty_2$, class group $Cl_{K_3}(\mathfrak{f}_3)\cong \mathbb{Z}/4\mathbb{Z}\times (\mathbb{Z}/2\mathbb{Z})^3$ with quotient group  $Cl_{K_3}(\mathfrak{f}_3)/N_{L/K_3}\cong (\mathbb{Z}/2\mathbb{Z})^2$.\\

Determining the congruence conditions corresponding to the class theta series stemming from a real quadratic quadratic field are a bit more complicated. To begin with, we require a totally positive integer, $\nu$ for which $\nu+1$ is in $(4\sqrt{6})$. We will take $\nu=47$. If we again take $\mathcal{I}, \mathcal{J}\in Cl_{K_3}(\mathfrak{f}_3)/N_{L/K_3}$ with $\chi_3$ and $\chi_3'$ both mapping classes in $\mathcal{I}$ to $1$ and classes in $\mathcal{J}$ to $-1$, then $[(\nu)]\in \mathcal{J}$. 
Given a class $\mathfrak{C}$, we can calculate congruence conditions for 
$\theta_{\mathfrak{C}}-\theta_{[(\nu)]\mathfrak{C}},$
so each class in $\mathcal{I}$ is associated to a class in $\mathcal{J}$, and these classes share a congruence condition. 

\begin{table}[h]
\centering
\begin{tabular}{|c|c|c|c|c|c|c|c|}
\hline
& $\mathfrak{C}$ & $x_1$& $y_1$&$M$&$i$& $j$& $Q\;(24)$ \\
\hline
& $[\mathcal{O}_{K_1}]$& $\frac{1}{8}$& $\frac{1}{12}$& 24& 3 & 2 & 1\\
& $[(4927-416\sqrt{6})]$& $\frac{7}{8}$&$\frac{5}{12}$ & 24&21 & 14&1\\
& $[(631+50\sqrt{6})]$ &$\frac{3}{8}$ & $\frac{11}{12}$& 24&9 & 22& 1\\
$\mathcal{I}$& $[(2984137-16146\sqrt{6})]$&$\frac{3}{8}$ & $\frac{7}{12}$& 24 &9 &14 &1\\
& $[(73+201\sqrt{6})]$&$\frac{3}{4}$ &$\frac{23}{24}$ &24 &18 &23 &5\\
& $[(1001-2535\sqrt{6})]$& $\frac{3}{4}$&$\frac{7}{24}$ &24 & 18& 7&5\\
& $[(106363+130481\sqrt{6})]$& $\frac{3}{4}$&$\frac{5}{24}$ &24 &18 &5 &5\\
& $[(128869+1549535\sqrt{6})]$&$\frac{1}{4}$ & $\frac{11}{24}$&24 &6 &11 &5\\
\hline
& $[(13)]$& $\frac{5}{8}$&$\frac{1}{12}$ &24 &15 &2 &1\\
& $[(379-32\sqrt{6})]$& $\frac{3}{8}$&$\frac{7}{12}$ &24 &9 & 14&1\\
& $[(18203+650\sqrt{6})]$& $\frac{1}{8}$ &$\frac{7}{12}$ &24 &3 &14 &1\\
$\mathcal{J}$& $[(229549-1242\sqrt{6})]$& $\frac{7}{8}$&$\frac{7}{12}$ &24 &21 & 14&1\\
& $[(77-195\sqrt{6})]$& $\frac{3}{4}$&$\frac{19}{24}$ &24 & 18& 19&5\\
& $[(949+2613\sqrt{6})]$& $\frac{3}{4}$&$\frac{11}{24}$ &24 &18 & 11&5\\
& $[(9913+119195\sqrt{6})]$& $\frac{1}{4}$&$\frac{23}{24}$ &24 &6 & 23&5\\
& $[(1382719+1696253\sqrt{6})]$& $\frac{3}{4}$&$\frac{17}{24}$ &24 & 18& 17&5\\
\hline
\end{tabular}
\caption{Classes of $Cl_{K_3}(\mathfrak{f}_3)$ in $\mathcal{I}$ and $\mathcal{J}$ and corresponding congruence conditions.}\label{K3table}
\end{table}

Table \ref{K3table} contains the results of the calculation, for each class in $\mathcal{I}$ and $\mathcal{J}$. In this case $Q(x,y)=\frac{1}{3}x^2-\frac{1}{2}y^2$ if $\mathfrak{C}$ is a ray class of ideals with totally positive generators, and $Q(x,y)=\frac{1}{6}x^2-y^2$ otherwise. Because these conditions give each difference of theta functions $\theta_{\mathfrak{C}}-\theta_{[(\nu)]\mathfrak{C}}$ in terms of a quadratic form and congruence conditions, it suffices to use the results for the classes in $\mathcal{I}$. This gives the following sets of congruence conditions:

\begin{align*}
    &A_{1} = \{(3,2), (9,14), (9,22), (21,14)\}\\
    &A_{5} = \{(6,11), (18,5), (18,7), (18,23)\}.
\end{align*}

Take pairs $(x,y)$ for the corresponding quadratic form, and then such pairs contribute positively or negatively depending upon $\text{sgn}(2x+y)$ for classes of ideals with totally positive generators or $\text{sgn}(x+y)$ for the remaining classes. Putting these conditions together gives the following sets of integer pairs:
\begin{align*}
    & S_{1,+} = \{(x,y)\mid x\equiv a\text{ and } y\equiv b\!\!\!\!\pmod{24}\text{ for }(a,b)\in A_1,\text{and } 2x>\sqrt{6}|y| \}\\  
    & S_{1,-} = \{(x,y)\mid x\equiv a \text{ and } y\equiv b\!\!\!\!\pmod{24}\text{ for }(a,b)\in A_1,\text{and } 2x<-\sqrt{6}|y| \}\\  
     & S_{5,+} = \{(x,y)\mid x\equiv a \text{ and } y\equiv b\!\!\!\!\pmod{24}\text{ for }(a,b)\in A_5,\text{and } x>\sqrt{6}|y| \}\\ 
      & S_{5,-} = \{(x,y)\mid x\equiv a \text{ and } y\equiv b\!\!\!\!\pmod{24}\text{ for }(a,b)\in A_5,\text{and } x<-\sqrt{6}|y| \}.
\end{align*}

Indefinite quadratic forms and real quadratic fields have an additional complication. If an indefinite form $Q(x,y)$ has a fundamental solution, that is, a pair of integers $(x_0,y_0)$ for which $Q(x_0,y_0)=1$, then any solution to $Q(x,y)=m$ yields an infinite family of equivalent solutions. Alternatively from the perspective of a number field, if there exists a non-torsion unit, then any element has infinitely many associates of the same norm. As $\mathbb{Q}(\sqrt{6})$ has non-torsion unit $5+2\sqrt{6}$, we also need to consider the orbits of the unit group, as any associate integers generate the same ideal. As such, we can define an equivalence relation on $\mathbb{Z}^2$ by 
defining $(x_1,y_1)\sim(x_2,y_2)$ if $x_1=x_2$ and $y_1=y_2$ or
\begin{equation}\label{fundsoleqrel}
x_1=5x_2+ 12 y_2\;\text{ and }\;y_1=2x_2+5y_2,
\end{equation}
and extend this to be symmetric and transitive. Then define
$$\tilde{S}_{i,*}=S_{i,*}/\sim.$$

This allows us to give the following formulas, where the choice of representative for each equivalence class of integer pairs does not effect sum:

\begin{align}
  \nonumber  \Theta(q)=&\sum_{(x,y)\in \tilde{S}_{1,+}}q^{\frac{1}{3}x^2-\frac{1}{2}y^2}-\sum_{(x,y)\in \tilde{S}_{1,-}}q^{\frac{1}{3}x^2-\frac{1}{2}y^2}\\
\nonumber    &+\sum_{(x,y)\in \tilde{S}_{5,+}}q^{\frac{1}{6}x^2-y^2}-\sum_{(x,y)\in \tilde{S}_{5,-}}q^{\frac{1}{6}x^2-y^2},\\
\label{rhocompeqn} \rho(q)=&\sum_{(x,y)\in \tilde{S}_{1,+}}q^{\frac{2x^2-3y^2-6}{144}}-\sum_{(x,y)\in \tilde{S}_{1,-}}q^{\frac{2x^2-3y^2-6}{144},}\\
\nonumber  \rho^*(q)=&\sum_{(x,y)\in \tilde{S}_{5,+}}q^{\frac{x^2-6y^2-30}{144}}-\sum_{(x,y)\in \tilde{S}_{5,-}}q^{\frac{x^2-6y^2-30}{144}}.
\end{align}

\subsection{Comparison with $\sigma$}

We can use Andrews et al.'s \cite[(3.1)]{ADH} analysis of the $\mathbb{Q}(\sqrt{6})$-integers or ideals contributing to the coefficients of $\sigma$ and $\sigma^*$ and reinterpret it get a similar expressions to those just given for $\rho$. This yields the following congruence conditions:
\begin{align*}
    &A_+=\{(1,0), (11,0), (5,2), (7,2)\}\\
    &A_-=\{(5,0), (7,0), (1,2), (11,2)\}.
\end{align*}

An interesting difference here is that the formula derived for $\sigma$ are in terms of $Q(x,y)=x^2-6y^2$ while the formula for $\sigma^*$ is in terms of $-Q(x,y)=6y^2-x^2$, rather than a distinct quadratic form. The congruence conditions in $A_{\pm}$ determine whether $q^{\pm Q(x,y)}$ is counted positively or negatively, with the same conditions being used to define both $\sigma$ and $\sigma^*$. Integrating the additional condition on the sign of $Q(x,y)$, we get the following sets of integers:
\begin{align*}
    S_{1,+} &= \{(x,y)\mid x\equiv a\!\!\!\!\pmod{12}, \;y\equiv b\!\!\!\!\pmod{4}\text{ for }(a,b)\in A_+,\; |x|>\sqrt{6}|y| \}\\
    S_{1,-} &= \{(x,y)\mid x\equiv a\!\!\!\!\pmod{12}, \;y\equiv b\!\!\!\!\pmod{4}\text{ for }(a,b)\in A_-,\; |x|>\sqrt{6}|y| \}\\
    S_{23,+} &= \{(x,y)\mid x\equiv a\!\!\!\!\pmod{12},\; y\equiv b\!\!\!\!\pmod{4}\text{ for }(a,b)\in A_+,\; |x|<\sqrt{6}|y| \}\\
    S_{23,+} &= \{(x,y)\mid x\equiv a\!\!\!\!\pmod{12}, \;y\equiv b\!\!\!\!\pmod{4}\text{ for }(a,b)\in A_-,\; |x|<\sqrt{6}|y| \}.
\end{align*}

Since there is a non-torsion unit in $\mathbb{Q}(\sqrt{6})$, we again need to restrict to just distinct integer pairs with respect to $Q(x,y)$. Taking the same definition for $\sim$ as an equivalence relation on integer pairs defined by extending (\ref{fundsoleqrel}),  
for $i\in \{1,23\}$ and $*\in \{+,-\}$, we define
$$\tilde{S}_{i,*}=S_{i,*}/\sim.$$
With all of this in hand, one interpretation (\ref{sigmaeqn}) together with other results of Andrews et al. \cite[(3.1), Thms 2, 5]{ADH} is that:

\begin{align}
  \nonumber  \Theta_{(4(3+\sqrt{6})),\chi}(q)&=\sum_{(x,y)\in \tilde{S}_{1,+}}q^{x^2-6y^2}-\sum_{(x,y)\in \tilde{S}_{1,-}}q^{x^2-6y^2}\\
  \nonumber  &+\sum_{(x,y)\in \tilde{S}_{23,+}}q^{6y^2-x^2}-\sum_{(x,y)\in \tilde{S}_{23,-}}q^{6y^2-x^2},\\
  \label{sigmabqfeqn}  \sigma(q)&=\sum_{(x,y)\in \tilde{S}_{1,+}}q^{\frac{x^2-6y^2-1}{24}}-\sum_{(x,y)\in \tilde{S}_{1,-}}q^{\frac{x^2-6y^2-1}{24}},\\
  \nonumber  \sigma^*(q)&=\sum_{(x,y)\in \tilde{S}_{23,+}}q^{\frac{6y^2-x^2+1}{24}}-\sum_{(x,y)\in \tilde{S}_{23,-}}q^{\frac{6y^2-x^2+1}{24}}.
\end{align}

From this we can directly compare (\ref{rhocompeqn}) and (\ref{sigmabqfeqn}). Both are given as sums over pairs of integers satisfying congruence conditions and inequalities, restricted to take just one pair for each set of associates in $\mathbb{Z}[\sqrt{6}]$. They differ in the quadratic form defining the exponent on $q$ in these sums, and in the particular congruence conditions. These differences mean that $\rho$ gives a modular form when taking $q\mapsto q^{24}$ and multiplying by $q$, while $\sigma$ is not itself an automorphic form under a similar transformation, but in some sense shares coefficients with a Maass form \cite{Cohen}, and both can be expressed in terms of partition generating functions.

\section{Proof of Proposition \ref{PartC}} 
Here we prove that $\rho$ can be written in terms of a partition generating function.
\begin{proof}[Proof of Proposition \ref{PartC}]
We have confirmed in (\ref{rhoetaquoteqn}) that
$$q\rho(q^{24})=\frac{\eta(q^{48})^8}{\eta(q^{24})^3\eta(q^{96})^3}.$$
Using the fact that $(q^a;q^a)_\infty(-q^a;q^a)_\infty=(q^{2a};q^{2a})_\infty$, it follows that
\begin{align*}
\rho(q)& = \frac{(-q;q)_\infty^3(q^2;q^2)_\infty^2}{(-q^2;q^2)_\infty^3} =(-q;q^2)_\infty^3(q^2;q^2)_\infty^2.
\end{align*}

Here $(-q;q^2)_\infty^3$ generates the partition function that counts the number of $3$-colored partitions of $n$ into distinct odd parts, and $(q^2;q^2)_\infty^2$ generates the partition function that counts the number of $2$-colored partitions into an even number of distinct even parts minus the number of $2$-colored partitions of $n$ into an odd number of distinct even parts. Together we can interpret
$$\rho(q)=\sum_{n\geq 0}r(n)q^n$$
by
$$r(n)=r_e(n)-r_o(n),$$
where $r_e(n)$ counts the number of partitions of $n$ into distinct $3$-colored odd parts and an even number of distinct $2$-colored even parts, and $r_o(n)$ counts the number of partitions of $n$ into distinct $3$-colored odd parts and an odd number of distinct $2$-colored even parts.
 \end{proof}

\end{document}